\documentclass[11pt]{article}
\usepackage{amsmath,amssymb,amsfonts,amsthm}
\usepackage{float}
\usepackage{graphicx}
\numberwithin{equation}{section}
\newtheorem{theorem}{Theorem}[section]
\newtheorem{definition}{Definition}[section]
\newtheorem{lemma}[theorem]{Lemma}
\newtheorem{proposition}[theorem]{Proposition}
\newtheorem{corollary}[theorem]{Corollary}

\pdfpagewidth 8.5in
\pdfpageheight 11in
\setlength\topmargin{0in}
\setlength\headheight{0in}
\setlength\headsep{0in}
\setlength\textheight{7.7in}
\setlength\textwidth{6.5in}
\setlength\oddsidemargin{0in}
\setlength\evensidemargin{0in}
\setlength\parindent{0.25in}
\setlength\parskip{0.25in} 
\begin{document}
\begin{center}
{\Large{\textbf{{A Study on Linear Jaco Graphs}}}} 
\end{center}
\vspace{0.5cm}
\large{
\centerline{(Johan Kok, Susanth C, Sunny Joseph Kalayathankal)\footnote {\textbf {Affiliation of authors:}\\
\noindent Johan Kok (Tshwane Metropolitan Police Department), City of Tshwane, Republic of South Africa\\
e-mail: kokkiek2@tshwane.gov.za\\ \\
\noindent Susanth C (Department of Mathematics, Vidya Academy of Science and Technology), Thalakkottukara, Thrissur-680501, Republic of India\\
e-mail: susanth\_c@yahoo.com\\ \\
\noindent Sunny Joseph Kalayathankal (Department of Mathematics, Kuriakose Elias College), Mannanam, Kottayam- 686561, Kerala, Republic of India\\
e-mail: sunnyjoseph2014@yahoo.com}}
\vspace{0.5cm}
\begin{abstract}
\noindent  We introduce the concept of a family of finite directed graphs (\emph{positive integer order,} $f(x) = mx + c; x,m \in \Bbb  N$ and $c \in \Bbb N_0)$ which are directed graphs derived from an infinite directed graph called the $f(x)$-root digraph. The $f(x)$-root digraph has four fundamental properties which are; $V(J_\infty(f(x))) =\{v_i:i \in \Bbb N\}$ and, if $v_j$ is the head of an arc then the tail is always a vertex $v_i, i<j$ and, if $v_k$ for smallest $k \in \Bbb N$ is a tail vertex then all vertices $v_ \ell, k< \ell<j$ are tails of arcs to $v_j$ and finally, the degree of vertex $v_k$ is $d(v_k) = mk +c.$ The family of finite directed graphs are those limited to $n \in \Bbb N$ vertices by lobbing off all vertices (and corresponding arcs) $v_t, t > n.$ Hence, trivially we have $d(v_i) \leq mi + c$ for $i \in \Bbb N.$ It is meant to be an \emph {introductory paper} to encourage further research.
\end{abstract}
\noindent {\footnotesize \textbf{Keywords:} Linear function Jaco graph, Hope graph, directed graph, Jaconian vertex, Jaconian set}\\ \\
\noindent {\footnotesize \textbf{AMS Classification Numbers:} 05C07, 05C38, 05C78} 
\section{Introduction}
\noindent For general notation and concepts in graph theory, we refer to [2, 3, 4, 8]. All graphs mentioned in this paper are simple, connected finite and directed graphs unless mentioned otherwise. We introduce the concept of a family of finite linear Jaco graphs (\emph{positive integer order,} $f(x) = mx + c; x,m \in \Bbb N$ and $c \in \Bbb N_0)$ which are directed graphs derived from the infinite linear Jaco graph (\emph{positive integer order,} $f(x) = mx + c; x,m \in \Bbb N$ and $c \in \Bbb N_0)$, called the $f(x)$-root Jaco graph. In this study we generalise a number of results found in [6, 7] because the aforesaid studies relate to special cases of linear Jaco graphs.
\begin{definition}
Let $f(x) = mx + c; x,m \in \Bbb N,$ $c\in \Bbb N_0.$ The family of infinite linear Jaco graphs denoted by $\{J_\infty(f(x)):f(x) = mx + c; x,m \in \Bbb N$ and $c \in \Bbb N_0\}$ is defined by $V(J_\infty(f(x))) = \{v_i: i \in \Bbb N\}$, $A(J_\infty(f(x))) \subseteq \{(v_i, v_j): i, j \in \Bbb N, i< j\}$ and $(v_i,v_ j) \in A(J_\infty(f(x)))$ if and only if $(f(i) + i) - d^-(v_i) \geq j.$
\end{definition}
From definition 1.1 we note the $f(x)$-root Jaco graph has four fundamental properties which are;\\\\
(a) $V(J_\infty(f(x))) =\{v_i:i \in \Bbb N\}$ and,\\
(b) if $v_j$ is the head of an arc then the tail is always a vertex $v_i, i<j$ and,\\
(c) if $v_k$ for smallest $k \in \Bbb N$ is a tail vertex then all vertices $v_ \ell, k< \ell<j$ are tails of arcs to $v_j$ and finally,\\
(d) the degree of vertex $k$ is $d(v_k) = mk + c.$ 
\begin{definition}
\noindent For $f(x) = mx + c; x,m \in \Bbb N$ and $c \in \Bbb N_0,$ we define the series $(c_{f(x),n})_{n \in \Bbb N_0}$ by
\begin{center} $c_{f(x), 0} = 0,$ $c_{f(x),1} = 1,$ $c_{f(x),n\geq 2}= \min \{ k < n: mk + c_{f(x), k} \geq n \}$.
\end{center}
\end{definition}
\noindent The connection between the $f(x)$-root digraph $J_\infty(f(x))$ and the series $(c_{f(x),n})_{n\in \Bbb N_0}$ is explained by the following lemma.
\begin{lemma} 
Consider the Jaco graph $J_\infty(f(x))$ and let $n \in \Bbb N$ then the following hold:\\
(a) $d^+(v_n) + d^-(v_n) = f(n).$\\
(b) $d^-(v_{n+1}) \in \{ d^- (v_n),\, d^-(v_n) + 1 \}.$\\
(c) If $(v_i, v_k) \in A(J_\infty(f(x)))$ and $i < j < k,$ then $(v_j, v_k) \in A(J_\infty(f(x))).$\\
(d)  $d^+ (v_n) = ((m-1)n + c) + c_{f(x),n},$ $n \geq 2.$
\end{lemma}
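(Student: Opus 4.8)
The plan is to phrase everything in terms of the \emph{reach} $g(i) := (f(i)+i) - d^-(v_i)$ of a vertex $v_i$, so that Definition 1.1 reads: $(v_i,v_j)\in A(J_\infty(f(x)))$ if and only if $i<j\le g(i)$. In particular the out-neighbourhood of $v_i$ is the block $\{v_{i+1},v_{i+2},\dots,v_{g(i)}\}$, and it is nonempty since $d^-(v_i)\le i-1$ forces $g(i)\ge f(i)+1 = mi+c+1\ge i+1$. Part (a) is then immediate: $d^+(v_n)=g(n)-n=f(n)-d^-(v_n)$, hence $d^+(v_n)+d^-(v_n)=f(n)$ (this also re-derives fundamental property (d)).

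For (b) and (c) I would run a single induction on $n$, proving (b) at each step. The driving observation is $g(t+1)-g(t) = (m+1)-\bigl(d^-(v_{t+1})-d^-(v_t)\bigr)$: granting (b) for all indices $<n$ gives $g(t+1)-g(t)\in\{m,m+1\}$ for $t<n$, so $g(1)<g(2)<\cdots<g(n)$. Since $d^-(v_n)=|\{t<n : g(t)\ge n\}|$, strict monotonicity of $g$ on $\{1,\dots,n\}$ forces $|\{t<n : g(t)=n\}|\le 1$; comparing with $d^-(v_{n+1})=|\{t\le n : g(t)\ge n+1\}| = |\{t<n : g(t)\ge n+1\}| + 1$ (the final $+1$ because $g(n)\ge n+1$) yields $d^-(v_{n+1})-d^-(v_n) = 1-|\{t<n:g(t)=n\}|\in\{0,1\}$, which is (b) for $n$; the base case $n=1$ is the trivial $d^-(v_1)=0$, $d^-(v_2)=1$. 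Once (b) holds for all $n$, $g$ is globally increasing, and (c) follows at once: if $(v_i,v_k)\in A$ and $i<j<k$, then $k\le g(i)\le g(j)$, so $(v_j,v_k)\in A$.

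For (d), part (a) turns the claim into $n-d^-(v_n)=c_{f(x),n}$ for $n\ge2$, since $f(n)-d^-(v_n) = ((m-1)n+c)+(n-d^-(v_n))$. Monotonicity of $g$ makes $\{t<n : g(t)\ge n\}$ a final segment $\{i_0,i_0+1,\dots,n-1\}$ of $\{1,\dots,n-1\}$ (nonempty for $n\ge2$, since $g(n-1)\ge n$), so $n-d^-(v_n)=i_0=\min\{t\ge1 : g(t)\ge n\}$. Writing $a_n := n-d^-(v_n)$ and substituting $g(t)=f(t)+t-d^-(v_t)=mt+c+a_t$, the condition $g(t)\ge n$ becomes a linear inequality in $t$ and $a_t$; since $a_1=1$ matches the initial value of $(c_{f(x),n})$, an induction on $n$ (feeding $a_t=c_{f(x),t}$ for $t<n$ into that inequality) identifies $a_n$ with the minimisation defining $c_{f(x),n}$, so $a_n=c_{f(x),n}$ and (d) follows.

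The step I expect to be most delicate is the inductive passage in (b): one must extract \emph{strict} increase of $g$ on the whole initial segment $\{1,\dots,n\}$ from the inductive hypothesis, because it is strictness (not merely weak monotonicity) that caps $|\{t<n:g(t)=n\}|$ at $1$ and hence keeps $d^-(v_{n+1})-d^-(v_n)$ in $\{0,1\}$. A secondary point needing care is the arithmetic bookkeeping in (d): pinning down the upper end of the final segment $\{i_0,\dots,n-1\}$ and matching the additive constant in $g(t)=mt+c+a_t$ against the recursion for $c_{f(x),n}$, so that the constant $(m-1)n+c$ in the statement comes out exactly.
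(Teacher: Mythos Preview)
Your argument is correct and tracks the paper's proof closely: both establish (a) directly from the definition, prove (b) and (c) by induction built around the minimal tail vertex (your $i_0=\min\{t:g(t)\ge n\}$ is exactly the paper's $\ell$), and deduce (d) by showing that this minimal index satisfies the recursion defining $c_{f(x),n}$. Your repackaging via the reach function $g(i)=(f(i)+i)-d^-(v_i)$ and the counting identity $d^-(v_{n+1})-d^-(v_n)=1-|\{t<n:g(t)=n\}|$ streamlines the paper's case analysis (which splits on whether $(v_\ell,v_{n+1})\in A$, i.e.\ on whether $g(\ell)>n$ or $g(\ell)=n$), but the underlying mechanism is identical. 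One remark on your bookkeeping worry in (d): your computation gives $g(t)=mt+c+a_t$, so the minimisation you obtain is $\min\{t:mt+c+a_t\ge n\}$; this is what the paper's own proof also arrives at (once one corrects the dropped $\ell$ in its displayed identity), and it agrees with Definition~1.2 only if the latter is read as $f(k)+c_{f(x),k}\ge n$ rather than $mk+c_{f(x),k}\ge n$ --- a discrepancy in the paper, not in your argument.
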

\begin{proof} 
(a) Invoking definition 1.1 and since $d^+(v_n) = (f(n) + n) - n - d^-(v_n),$ the result is obvious.\\
(b), (c) We prove (b) and (c) simultaneously through induction on $n.$ First of all, $d^-(v_1) = 0$ implying $d^-(v_2) = 1 = d^-(v_1) + 1.$\\ \\
Let $n \geq 2$ and assume result (b) holds for $m \leq n$ and (c) holds for $m \leq n-1.$ In particular, $ d^-(v_n) > 0.$ Let $\ell <n$ be minimal with $(v_\ell, v_n) \in A(J_\infty(f(x))),$ so $(f(\ell) + \ell) - d^-(v_\ell) \geq n.$ Let $\ell < j < n.$ By induction, we have
$d^-(v_\ell) \leq d^-(v_j) \leq d^-(v_\ell) + j - \ell$ and $(f(j)+j) - d^-(v_j) \geq n.$ Hence, and by choice of $\ell,$ we have $(v_k, v_n) \in A(J_\infty(f(x)))$ if and only if $\ell \leq k < n,$ hence  result $(c)$ is valid for $n,$ while $d^-(v_n) = n - \ell$ and $d^+(v_n) = (f(n) + n) - (n - \ell) = f(n) + \ell.$\\ \\
If $(f(\ell) + \ell) - d^-(v_\ell) \geq n + 1,$ then $\ell$ is minimal with $(f(\ell) + \ell) - d^-(v_\ell) \geq n + 1.$ If $(f(\ell) + \ell) - d^-(v_\ell) = n,$ we still, as $\ell + 1 \leq n,$ have $d^- (v_{\ell +1}) \in  \{ d^- (v_\ell),\, d^-(v_{\ell) + 1} \}$ and $(f(\ell+1) + \ell + 1) - d^-(v_{\ell +1}) \geq n+1.$  Either way, $(v_{\ell + 1}, v_{n+1}) \in A(J_\infty(f(x))).$ If $\ell + 1 < j < n,$ then induction yields $d^-(v_{\ell +1}) \leq d^-(v_j) \leq d^- (v_{\ell +1}) + (j - \ell -1) $ and $(f(j) + j) - d^-(v_j)   \geq n + 1.$ As $d^-(v_n) \leq n-1, (v_n, v_{n+1}) \in A(J_\infty(f(x))),$ so $(v_k, v_{n+1}) \in A(J_\infty(f(x)))$ whenever $\ell +1 \leq k \leq n.$ Depending on whether $(v_\ell, v_{n+1}) \in A(J_\infty(f(x)))$ or not, we obtain $d^-(v_{n+1}) = n+1 - \ell = (n-\ell) + 1 = d^- (v_\ell) + 1$ or $d^-(v_{n+1}) = n+1 - (\ell+1) = d^- (v_n).$
(d) Let $n \geq 3,$ and, as before, choose $\ell$ minimal with $(v_\ell, v_n) \in A(J_\infty(a)).$ We prove the result by induction on $n$ and apply arguments very similar to the ones already used. First of all, $d^-(v_1) = 0,$ and $d^+ (v_1) = f(1) = ((m-1)1 + c) + c_{f(x),1}.$ Now let $n > 1,$ and, as before, choose $\ell$  minimal such that $(v_\ell, v_n) \in A(J_\infty(a)).$ By $(a)$ and $(c),$ $d^- (v_n) = n - \ell,$ and $d^+(v_n) = (mn + c) - n + \ell = ((m-1)n + c) + \ell.$ Induction yields that $d^+(v_k) = d^+ (v_k) = ((m-1)k + c) + c_{f(x),k},$ whenever $k < n.$  The definition of $\ell$ says that  $\ell$ is minimal with $\ell + d^+(v_\ell) = ((m-1)\ell + c) + c_{f(x),\ell} \geq n,$ which means that $\ell  = c_{f(x),n}.$
\end{proof}
\begin{corollary}
Note that $(a)$ and $(b)$ of Lemma 1.1 entail that $d^-(v_{n+1}) = (n+1) - c_{f(x), n+1} \in \{ n - c_{f(x),n}, n - c_{f(x), n} + 1\}$ and that $(d)$ then implies that the series  $(c_{f(x),n})$ are well-defined and ascending, more specifically, $c_{f(x),n+1} \in \{ c_{f(x),n}, c_{f(x),n} + 1 \},$ ($n \in \Bbb N_0$).
\end{corollary}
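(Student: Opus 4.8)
The plan is to translate everything into statements about the in-degrees $d^-(v_n)$ and then read the conclusion off. First I would record a closed form for $d^-$: combining Lemma 1.1(a) with Lemma 1.1(d), for $n \geq 2$ one gets $d^-(v_n) = f(n) - d^+(v_n) = (mn+c) - \bigl(((m-1)n+c) + c_{f(x),n}\bigr) = n - c_{f(x),n}$, and the two remaining indices are handled directly from the defining values, $d^-(v_1) = 0 = 1 - c_{f(x),1}$ and (vacuously) $n = 0$. Thus $d^-(v_k) = k - c_{f(x),k}$ holds for every $k \in \Bbb N_0$ for which the sequence has been defined; in particular $d^-(v_{n+1}) = (n+1) - c_{f(x),n+1}$, which is the first equality in the corollary.

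Next I would feed this into Lemma 1.1(b), which gives $d^-(v_{n+1}) \in \{ d^-(v_n),\, d^-(v_n)+1 \}$. Substituting the closed form on both sides yields $(n+1) - c_{f(x),n+1} \in \{ n - c_{f(x),n},\ n - c_{f(x),n} + 1 \}$, exactly the displayed containment. Solving for $c_{f(x),n+1} = (n+1) - d^-(v_{n+1})$ and carrying the two cases through gives $c_{f(x),n+1} \in \{ c_{f(x),n}+1,\ c_{f(x),n} \}$, i.e. the sequence is non-decreasing with unit steps, which is the asserted \emph{ascending} property. For $n = 0$ the step relation reads $c_{f(x),1} \in \{ c_{f(x),0},\, c_{f(x),0}+1 \} = \{0,1\}$, consistent with the defining value $c_{f(x),1} = 1$.

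It remains to check that the recursion in Definition 1.2 defines something at all, i.e. that for each $n \geq 2$ the set $\{ k < n : mk + c_{f(x),k} \geq n \}$ over which the minimum is taken is non-empty. I would prove this by a short parallel induction: the base values $c_{f(x),0} = 0$ and $c_{f(x),1} = 1$ are given; if $c_{f(x),k}$ is defined for all $k < n$, then $c_{f(x),k} \geq 1$ whenever $1 \leq k < n$ (the minimum never selects $k = 0$ when $n \geq 1$, since $m \cdot 0 + c_{f(x),0} = 0 < n$), so the choice $k = n-1$ satisfies $m(n-1) + c_{f(x),n-1} \geq (n-1) + 1 = n$ because $m \geq 1$. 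Hence the minimum is taken over a non-empty set, $c_{f(x),n}$ is well-defined, and the induction proceeds.

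I do not expect a serious obstacle: the statement is bookkeeping built on Lemma 1.1. The two points that need care are (i) the order of the argument --- well-definedness of $(c_{f(x),n})$ has to be secured before one can speak of its monotonicity, which is why it gets its own induction rather than being read off the graph --- and (ii) the boundary indices $n = 0, 1$, where Lemma 1.1(d) does not apply and the identity $d^-(v_k) = k - c_{f(x),k}$ must be verified against the explicit initial values.
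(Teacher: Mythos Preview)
Your proposal is correct and follows the route the paper itself signals: the corollary in the paper is stated without a separate proof, merely as a ``Note'' drawing on Lemma~1.1, and your derivation $d^-(v_n)=f(n)-d^+(v_n)=n-c_{f(x),n}$ from parts (a) and (d), followed by substitution into part (b), is exactly the intended unpacking. Your independent inductive verification that the defining minimum in Definition~1.2 is taken over a non-empty set (via $m(n-1)+c_{f(x),n-1}\geq (n-1)+1=n$) is a welcome addition, since the paper leaves well-definedness implicit in the proof of Lemma~1.1(d) rather than arguing it directly.
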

Consider the function $g(x) = m_1x + c_1; x,m_1 \in \Bbb N,$ $c_1\in \Bbb N_0.$ In respect of $f(x)$ and $g(x)$ we have the following proposition.
\begin{proposition}
Let $k \in \Bbb N,$ and $0 \leq g(x) < f(x) .$ Then $c_{f(x),mk + c_{f(x),k} - g(x)} = k.$
\end{proposition}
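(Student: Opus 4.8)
The plan is to read $c_{f(x),n}$ off the out-degree information recorded in Lemma 1.1. The pivotal fact, already extracted inside the proof of Lemma 1.1(d), is that for $n\ge 2$ the number $c_{f(x),n}$ is the least index $\ell$ for which $(v_\ell,v_n)\in A(J_\infty(f(x)))$. By Definition 1.1 and Lemma 1.1(a) (so $d^+(v_i)=f(i)-d^-(v_i)$), the arc $(v_i,v_n)$ with $i<n$ is present exactly when $n\le i+d^+(v_i)$. Writing $R(i):=i+d^+(v_i)$ for the furthest vertex reachable from $v_i$ in a single step, Lemma 1.1(d) gives $R(i)=mi+c+c_{f(x),i}$, and Corollary 1.2 shows that $R$ is strictly increasing. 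Putting these together yields the characterisation
\[
c_{f(x),n}=k\qquad\Longleftrightarrow\qquad R(k-1)<n\le R(k).
\]

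Granting this, the proposition becomes a substitution. The subscript $mk+c_{f(x),k}-g(x)$ is, in line with Lemma 1.1(d), the value $R(k)-g(x)$; set $n:=R(k)-g(x)$ and verify the two inequalities. The upper one, $n\le R(k)$, is immediate from $g(x)\ge 0$. The lower one, $n>R(k-1)$, rearranges to
\[
g(x)<R(k)-R(k-1)=m+c_{f(x),k}-c_{f(x),k-1},
\]
and here Corollary 1.2 pins $c_{f(x),k}-c_{f(x),k-1}$ to $\{0,1\}$, so the right-hand side is $m$ or $m+1$; the hypothesis $0\le g(x)<f(x)$ is what is invoked to keep $g(x)$ below this gap, with the boundary value $k=1$ handled directly from $c_{f(x),1}=1$. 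Once both inequalities hold, the displayed equivalence gives $c_{f(x),n}=k$.

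The two inequality checks are routine, and the real content is the characterisation of $c_{f(x),n}$ as the unique $k$ with $R(k-1)<n\le R(k)$, which merely repackages Lemma 1.1 and Corollary 1.2. The single delicate step is the lower inequality: one must ensure that subtracting $g(x)$ does not pull $n$ down to $R(k-1)$ or below, and this is exactly where the relation $0\le g(x)<f(x)$ between the two linear functions is used. I would therefore expect the crux of the write-up to be the short estimate translating the comparison $g<f$ into the bound $g(x)<m+c_{f(x),k}-c_{f(x),k-1}$.
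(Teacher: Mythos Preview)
Your approach is essentially the paper's: set $\ell$ equal to the given index, use $g(x)\ge 0$ to get $mk+c_{f(x),k}\ge\ell$ (hence $c_{f(x),\ell}\le k$), and use the upper bound on $g(x)$ together with Corollary~1.2 (replacing $c_{f(x),k}$ by $c_{f(x),k-1}$) to rule out $c_{f(x),\ell}\le k-1$. Your reach function $R(i)=i+d^+(v_i)$ is just a repackaging of the same quantities.

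There is, however, one genuine slip. You assert that the index $mk+c_{f(x),k}-g(x)$ equals $R(k)-g(x)$, but by Lemma~1.1(d) one has $R(k)=k+d^+(v_k)=mk+c+c_{f(x),k}$, so the index is $R(k)-c-g(x)$, not $R(k)-g(x)$. The paper sidesteps this by arguing directly from Definition~1.2, where the relevant threshold is $mk+c_{f(x),k}$ (no $c$), so the inequality $mk+c_{f(x),k}\ge\ell$ matches the index on the nose. If you either drop $R$ and work from Definition~1.2, or carry the offset $c$ through your two inequalities, your sandwich argument becomes identical to the paper's. The ``delicate step'' you single out---turning $0\le g(x)<f(x)$ into the needed gap bound---is handled in the paper by the one-line estimate $((m-1)k+c)+c_{f(x),k}<\ell$ followed by Corollary~1.2, which is exactly the lower inequality you describe.
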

\begin{proof}
Let $mk + c_{f(x),k} - g(x) = \ell.$ Certainly, $mk + c_{f(x), k} \geq \ell$ hence, $c_\ell \leq k.$  On the other hand,
$((m-1)k + c) + c_{f(x), k} = (mk + c) + c_{f(x),k} - k < \ell,$ so Corollary 1.2 says $((m-1)k + c) + c_{f(x), k-1} < \ell$ and $c_\ell = k.$
\end{proof}
\section{Finite Linear Jaco Graphs $\{J_n(f(x)):f(x) = mx + c; x,m\in \Bbb N$ and $c \in \Bbb N_0\}$} 
\noindent The family of finite linear Jaco graphs are those limited to $n \in \Bbb N$ vertices by lobbing off all vertices (and corresponding arcs) $v_t, t > n.$ Hence, trivially we have $d(v_i) \leq f(i)$ for $i \in \Bbb N.$
\begin{definition}
The family of finite linear Jaco graphs denoted by $\{J_n(f(x)):f(x) = mx + c; x,m \in \Bbb N$ and $c\in \Bbb N_0\}$ is the defined by $V(J_n(f(x))) = \{v_i: i \in \Bbb N, i \leq n \}$, $A(J_n(f(x))) \subseteq \{(v_i, v_j)| i,j \in \Bbb N, i< j \leq n\}$ and $(v_i,v_ j) \in A(J_n(f(x)))$ if and only if $(f(i) + i) - d^-(v_i) \geq j.$
\end{definition} 
\begin{definition}
Vertices with degree $\Delta (J_n(f(x)))$ is called Jaconian vertices and the set of vertices with maximum degree is called the Jaconian set of the linear Jaco graph $J_n(f(x)),$ and denoted, $\Bbb{J}(J_n(f(x)))$ or, $\Bbb{J}_n(f(x))$ for brevity.
\end{definition}
\begin{definition}
The lowest numbered (indexed) Jaconian vertex is called the prime Jaconian vertex of a Jaco graph.
\end{definition}
\begin{definition}
If $v_i$ is the prime Jaconian vertex, the complete subgraph on vertices $v_{i+1}, v_{i+2},\\ \cdots, v_n$ is called the Hope subgraph of a linear Jaco graph and denoted, $\Bbb{H}(J_n(f(x)))$ or, $\Bbb{H}_n(f(x))$ for brevity.
\end{definition}
\noindent {\bf Property 1:} From the definition of a linear Jaco graph $J_n(f(x)),$ it follows that, if for the prime Jaconian vertex $v_i,$ we have $d(v_i)= f(i)$ then in the underlying Jaco graph denoted $J_n^*(f(x))$ we have $ d(v_m)= f(m)$ for all $m\in \{1,2,3,\cdots,i\}.$\\ \\
{\bf Property 2:} From the definition of a linear Jaco graph $J_n(f(x)),$ it follows that $\Delta (J_k(f(x)))\leq \Delta (J_n(f(x)))$ for all $k\leq n.$\\ \\
{\bf Property 3:} From the definition of a linear Jaco graph $J_n(f(x)),$ it follows that the lowest degree attained by all Jaco graphs is $0 \leq \delta(J_n(f(x))) \leq f(1).$\\ \\
{\bf Property 4:} The $d^-(v_k)$ for any vertex $v_k$ of a linear Jaco graph $J_n(f(x)), n\geq k$ is equal to $d(v_k)$ in the underlying Jaco graph $J^*_k(f(x)).$\\\\
Note that henceforth, when the context is clear we interchangeably refer to a linear Jaco graph $J_n(f(x)),$ (directed) and the underlying Jaco graph $J^*_n(f(x)),$ (undirected) as a \emph{Jaco graph}. Similar, when the context is clear we refer to either \emph{arcs} or \emph{edges}. 
\newpage
\textbf{Illustration 1.} Figure 1 depicts the Jaco graph $J_{11}(f(x)),$ $ f(x) = 2x + 1.$ We note that vertex $v_4$ is the prime Jaconian vertex and $\Bbb J_{11}(f(x)) =\{v_4, v_5, v_6\}.$ 

\begin{figure}
\centering
\includegraphics[width=0.7\linewidth]{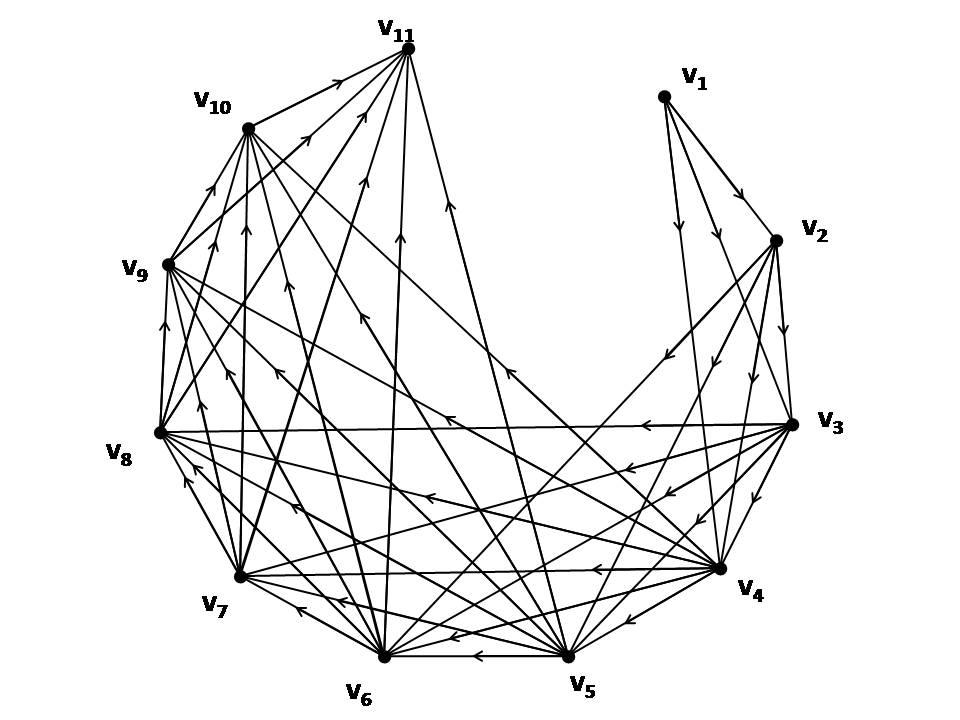}
\caption{}
\label{fig:Fig1}
\end{figure}

\begin{lemma}
For the Jaco graphs $J_i(f(x)), i\in \{1,2,3,...,f(1) + 1\}$ we have $\Delta (J_ i(f(x))) = i-1$ and $\Bbb {J}(J_i(f(x))) = \{v_k: 1\leq k \leq i\} =V(J_i(f(x))).$
\end{lemma}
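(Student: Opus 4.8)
The plan is to analyze the small Jaco graphs $J_i(f(x))$ for $i \le f(1)+1$ directly from Definition 2.1, showing that in this range the graph is in fact the complete digraph on $i$ vertices (as an underlying undirected graph, $K_i$), whence every vertex has degree $i-1$ and the whole vertex set is Jaconian. The key observation is that the ``degree cap'' imposed by the arc condition $(f(i)+i) - d^-(v_i) \ge j$ does not bite as long as the total number of vertices stays at most $f(1)+1$.

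First I would set up an induction on $i$. For the base case $i=1$ the graph is a single vertex, $\Delta = 0 = i-1$, and trivially $\Bbb{J}(J_1(f(x))) = V(J_1(f(x)))$. For the inductive step, I would assume that $J_{i-1}(f(x))$ is complete on its $i-1$ vertices, i.e. $d^-(v_k) = k-1$ for every $k \le i-1$ (this is Property 4 combined with the inductive hypothesis). Now add vertex $v_i$ with $i \le f(1)+1$, and check that every earlier vertex $v_k$, $k < i$, still sends an arc to $v_i$: we need $(f(k)+k) - d^-(v_k) \ge i$, i.e. $(f(k)+k) - (k-1) \ge i$, i.e. $f(k) + 1 \ge i$. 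Since $f$ is increasing on $\Bbb N$ (as $m \ge 1$), we have $f(k) \ge f(1)$ for all $k \ge 1$, so $f(k) + 1 \ge f(1) + 1 \ge i$, which is exactly our hypothesis on $i$. Hence all arcs $(v_k, v_i)$ are present, $v_i$ has in-degree $i-1$, the graph $J_i(f(x))$ is complete, and every vertex has (undirected) degree $i-1 = \Delta(J_i(f(x)))$.

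From completeness the two claimed conclusions are immediate: $\Delta(J_i(f(x))) = i-1$ because in $K_i$ every vertex has degree $i-1$, and $\Bbb{J}(J_i(f(x))) = \{v_k : 1 \le k \le i\} = V(J_i(f(x)))$ because every vertex attains this maximum. I would also remark that Lemma 1.1(b) (or Corollary 1.2) can be invoked to confirm $d^-$ grows by exactly $1$ at each step in this range, giving an alternative bookkeeping route, but the direct check above is cleaner.

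The only subtle point — and the one I would state carefully rather than treat as routine — is the monotonicity fact $f(k) \ge f(1)$, which is where the exact threshold $i \le f(1)+1$ comes from; one more vertex, $i = f(1)+2$, would force $f(1)+1 \ge f(1)+2$, a contradiction, so $v_1$ would fail to be adjacent to $v_i$ and the pattern breaks. This boundary behaviour is worth flagging since it motivates why $f(1)+1$ is the natural cutoff and presumably sets up the next results in the paper.
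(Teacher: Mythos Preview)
Your proof is correct and follows essentially the same strategy as the paper: show that for $i \le f(1)+1$ the underlying graph $J_i^*(f(x))$ is the complete graph $K_i$, whence every vertex has degree $i-1$ and the whole vertex set is Jaconian. The paper argues this only for the top case $i = f(1)+1$ (checking just the arcs out of $v_1$ and asserting the rest follow) and then says the smaller cases are similar, whereas your inductive argument with the explicit verification $f(k)+1 \ge f(1)+1 \ge i$ is cleaner and actually justifies the step the paper leaves implicit.
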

\begin{proof}
If $t =f(1)+1$ then, $((f(1)+1) +1) - d^-(v_1) > (f(1)+1)$ so from definition 2.1 it follows that the arcs $(v_1,v_i), i= 2,3, ..., (f(1)+1)$ exist. It then follows that all arcs $(v_i,v_j), i<j$ exist. So the underlying graph $J^*_{f(1)+1}(f(x))$ is the complete graph $K_{f(1)+1}.$ Since $\Delta (J_{f(1)+1}(f(x))) = (f(1)+1) - 1 = f(1)$ and we have $d(v_i) = f(1)$ for all vertices in $K_{f(1)+1},$ it follows that $\Delta (J_{f(1)+1}(f(x))) = f(1)$ and $\Bbb {J}(J_{f(1)+1}(f(x))) = \{v_k: 1\leq k \leq f(1)\} =V(J_{f(1)+1}(f(x))).$  The result follows similarly for $t < f(1)+1.$
\end{proof} 
\begin{proposition}
For the Jaco graphs $J_{f(i)}(f(x)),$ $1 \leq i \leq f(1)$ the prime Jaconian vertex is the vertex $v_i$ and $\Bbb J_{f(i)}(f(x)) =\{v_\ell: i \leq \ell \leq f(1) + 1\}.$
\end{proposition}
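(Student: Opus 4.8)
The plan is to reduce the statement to an explicit formula for the degree of each vertex of $J_{f(i)}(f(x))$. First I would establish a general auxiliary fact: for every $n\in\Bbb N$ and every $k$ with $1\le k\le n$,
\[
 d_{J_n(f(x))}(v_k)\;=\;\min\{\,f(k),\ n-c_{f(x),k}\,\}.
\]
The in-degree part is essentially handed to us. By property (b) of the $f(x)$-root digraph the in-neighbours of $v_k$ all carry an index $<k\le n$, so $d^-(v_k)$ is unaffected by the truncation (Property 4), and Corollary 1.2 gives $d^-(v_k)=k-c_{f(x),k}$. For the out-degree, Lemma 1.1(a) together with Definition 1.1 shows that $(v_k,v_j)\in A(J_\infty(f(x)))$ exactly when $k<j\le k+d^+(v_k)$, i.e. the out-neighbourhood of $v_k$ in the root digraph is the consecutive block $v_{k+1},\dots,v_{k+d^+(v_k)}$, and Lemma 1.1(d) evaluates $d^+(v_k)=(m-1)k+c+c_{f(x),k}$ (the case $k=1$ being checked by hand). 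Truncating to $n$ vertices merely clips this block on the right, so the out-degree in $J_n(f(x))$ equals $\min\{(m-1)k+c+c_{f(x),k},\ n-k\}$; adding the two contributions and simplifying gives the displayed formula.

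Next I would record two facts about the sequence $(c_{f(x),k})$, both immediate from what precedes. By Lemma 2.1 the graph $J_{f(1)+1}(f(x))$ is complete, so there $d^-(v_k)=k-1$, i.e. $c_{f(x),k}=1$ for all $1\le k\le f(1)+1$; and since the sequence is ascending (Corollary 1.2) while $v_1$ cannot be an in-neighbour of $v_{f(1)+2}$ (its out-block terminates at $v_{1+d^+(v_1)}=v_{f(1)+1}$), we have $c_{f(x),k}\ge 2$ for every $k\ge f(1)+2$.

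Now set $n=f(i)=mi+c$ and read degrees off the formula; the case $i=1$ is exactly Lemma 2.1 (it gives $J_{f(1)}(f(x))=K_{f(1)}$), so assume $2\le i\le f(1)$, in which case $f(1)+1\le f(i)$, so every index mentioned below lies in $V(J_{f(i)}(f(x)))$. For $k>f(1)+1$ we have $c_{f(x),k}\ge 2$, hence $d(v_k)\le f(i)-2$. For $i\le k\le f(1)+1$ we have $c_{f(x),k}=1$ and $f(k)\ge f(i)>f(i)-1$, hence $d(v_k)=f(i)-1$. For $k<i$ we still have $c_{f(x),k}=1$ but $f(k)\le f(i-1)=f(i)-m$, hence $d(v_k)=f(k)\le f(i)-m$. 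Granting that $f(i)-m<f(i)-1$, i.e. $m\ge 2$, it follows that $\Delta(J_{f(i)}(f(x)))=f(i)-1$, that the Jaconian set $\Bbb J_{f(i)}(f(x))$ is exactly $\{v_\ell:i\le\ell\le f(1)+1\}$, and that its least-indexed member, the prime Jaconian vertex, is $v_i$.

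The step I expect to be the genuine obstacle, and the one needing the most care, is precisely the range $k<i$: one must verify $f(k)<f(i)-1$ so that these low-indexed vertices stay out of the Jaconian set. This is where linearity enters, through $f(i)-f(i-1)=m$: the inequality is strict exactly when $m\ge 2$. For $m=1$ the vertex $v_{i-1}$ ties for maximum degree, so the conclusion (and in particular the claimed Jaconian set) must be amended in that case; likewise the written set $\{v_\ell:i\le\ell\le f(1)+1\}$ should be understood intersected with $V(J_{f(i)}(f(x)))$, which matters only at $i=1$, where $v_{f(1)+1}$ is absent. Modulo this bookkeeping, the three-range degree computation delivers the proposition.
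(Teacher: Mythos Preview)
Your argument is correct and proceeds along a genuinely different line from the paper. The paper argues structurally: it first notes that the induced subgraph on $v_1,\dots,v_{f(1)+1}$ is complete, so each of these vertices starts with degree $f(1)$, and then for a given $i$ counts the additional arcs from vertices $v_\ell$, $i\le\ell\le f(1)+1$, into the block $v_{f(1)+2},\dots,v_{f(i)}$ to conclude that these vertices share the common maximum degree while $v_i$ is the least-indexed one reaching the top vertex. You instead compress everything into the closed formula $d_{J_n(f(x))}(v_k)=\min\{f(k),\,n-c_{f(x),k}\}$ and then evaluate in the three index ranges $k<i$, $i\le k\le f(1)+1$, $k>f(1)+1$. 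What this buys you is transparency about where the slope $m$ actually enters: your observation that for $m=1$ the vertex $v_{i-1}$ already attains $f(i-1)=f(i)-1$, hence ties for maximum degree and is in fact the prime Jaconian vertex, is correct and shows that the proposition as stated needs $m\ge 2$. The boundary remark at $i=1$ (where $v_{f(1)+1}\notin V(J_{f(1)}(f(x)))$) is likewise a genuine edge case. The paper's arc-counting argument does not isolate these points; it never explicitly checks whether some $v_j$ with $j<i$ can match the maximum, which is exactly what goes wrong when $m=1$.
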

\begin{proof}
Consider the Jaco graph $J_{f(1)}(f(x)).$ It follows that the induced subgraph $ J' = \langle v_i; 1 \leq i \leq v_{f(1) + 1}\rangle$ is the complete graph $K_{f(1) + 1},$ hence $d_{J'}(v_i) = f(1),$ $1 \leq i \leq f(1) + 1.$ It follows immediately that $v_1$ is the prime Jaconian vertex of $J'$ and $\Bbb J(J') = \{v_\ell: 1\leq \ell \leq f(1) + 1\}.$\\\\
Also $f(i) > f(1)$, $2 \leq i \leq f(1)$ so besides the arcs in $J'$, $v_i$ has additional arcs $(v_i, v_s); f(1) + 2 \leq s \leq f(i) + 1$ in $J_{f(i) + 1}(f(x)).$ This amounts to $(f(i) - f(1)) -1$ additional arcs $\forall v_\ell,$ $ i \leq \ell \leq f(1)$ in $J_{f(i) + 1}(f(x)).$ Clearly $i$ is minimal such that arc $(v_i, v_{f(i) +1})$ exists so $v_i$ is the prime Jaconian vertex of the Jaco graph $J_{f(i) + 1}(f(x)).$\\\\
Because $d_{J'}((v_\ell) = f(1),$ $i\leq \ell \leq f(1) + 1$ it follows immediately that $d_{J'}(v_\ell) + (f(i) - f(1)) -1 = d_{J'}(v_i) + (f(i) - f(1)) -1$ in $J_{f(i) + 1}(f(x)).$ Hence, $\Bbb J_{f(i)}(f(x)) =\{v_\ell: i \leq \ell \leq f(1) + 1\}.$
\end{proof} 
\begin{lemma}
If in a Jaco graph $J_n(f(x)),$ and for smallest $i,$ we have $d(v_i) = f(i)$ and the arc $(v_i, v_n)$ is defined, then $v_i$ is the prime Jaconian vertex of $J_n(f(x)).$
\end{lemma}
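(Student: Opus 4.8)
The plan is to use the two hypotheses to locate the out-neighbourhood of $v_i$ exactly, deduce that $v_{i+1},\dots,v_n$ form the Hope subgraph, and then compare all degrees using the series $(c_{f(x),n})$.

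First I would note that the in-degree $d^-(v_i)$ is governed only by vertices of smaller index (Property~4), hence is the same in $J_n(f(x))$ as in $J_\infty(f(x))$, and that by Definition~1.1 the out-neighbours of $v_i$ form an initial block $v_{i+1},\dots,v_L$ with $L=(f(i)+i)-d^-(v_i)$; by Lemma~1.1(a), $L-i$ is exactly the out-degree of $v_i$ in $J_\infty(f(x))$. Now the hypothesis $d(v_i)=f(i)$ says this out-degree is not lost on passing to $J_n(f(x))$, i.e.\ $L\le n$, while the hypothesis that $(v_i,v_n)$ is an arc says, via Definition~2.1, that $L\ge n$. So $L=n$: the out-neighbourhood of $v_i$ in $J_n(f(x))$ is exactly $\{v_{i+1},\dots,v_n\}$. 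Applying Lemma~1.1(c) twice, every pair $v_p,v_q$ with $i<p<q\le n$ is joined by an arc, so $\langle v_{i+1},\dots,v_n\rangle$ is complete, namely the Hope subgraph.

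Then I would compute degrees. For $i<a\le n$ the out-neighbours of $v_a$ in $J_n(f(x))$ are just $v_{a+1},\dots,v_n$ (nothing lies beyond $v_n$), so $d^+(v_a)=n-a$; by Corollary~1.2, $d^-(v_a)=a-c_{f(x),a}$; hence $d(v_a)=n-c_{f(x),a}$. The same computation gives $d(v_i)=(n-i)+(i-c_{f(x),i})=n-c_{f(x),i}$. Since $(c_{f(x),n})$ is non-decreasing (Corollary~1.2), $c_{f(x),a}\ge c_{f(x),i}$ for all $a\ge i$, so $d(v_a)\le d(v_i)$ throughout the range $i\le a\le n$. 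For the lower-indexed vertices there is nothing to prove: $j<i$ gives $d(v_j)\le f(j)<f(i)=d(v_i)$ because $f$ is strictly increasing. Hence $d(v_i)=\Delta(J_n(f(x)))$ and no $v_j$ with $j<i$ attains the maximum, so $v_i$ is the prime Jaconian vertex.

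The only step needing real care is the first one: squeezing the equality $L=n$ out of the two hypotheses, and thereby recognising the Hope subgraph. Once that is in hand, the identity $d(v_a)=n-c_{f(x),a}$ (valid for $i\le a\le n$) together with the monotonicity of the series finishes everything; it also shows that the index $i$ in the statement is in fact uniquely determined, so ``smallest'' is redundant.
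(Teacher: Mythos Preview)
Your argument is correct. The key squeeze $L=n$ is exactly right: $(v_i,v_n)\in A$ gives $L\ge n$, while $d_{J_n}(v_i)=f(i)$ forces $\min(L,n)=L$, hence $L\le n$. From there the degree formula $d_{J_n}(v_a)=n-c_{f(x),a}$ for $i\le a\le n$ together with the monotonicity of $(c_{f(x),n})$ settles $a\ge i$, and the trivial bound $d(v_j)\le f(j)<f(i)$ handles $j<i$. One small remark: you only need Lemma~1.1(c) once, not twice---since every $v_q$ with $i<q\le n$ is already an out-neighbour of $v_i$, a single application gives $(v_p,v_q)$ for all $i<p<q\le n$.

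Your route differs from the paper's. The paper gives a two-line argument: taking $i$ minimal with $(v_i,v_n)$ an arc, it simply asserts that $d(v_j)\le d(v_i)$ for all $j>i$ in the underlying graph and concludes. It never isolates the equality $L=n$, never invokes the series $(c_{f(x),n})$, and does not explicitly treat the vertices $j<i$. What the paper's approach buys is brevity; what yours buys is an explicit, verifiable computation---the identity $d(v_a)=n-c_{f(x),a}$ makes the comparison transparent and, as you observe, shows that such an $i$ is in fact unique, so the qualifier ``smallest'' in the statement is superfluous. Your version also exhibits the Hope subgraph directly, which the paper's proof leaves implicit.
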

\begin{proof}
If in the construction of a Jaco graph $J_n(f(x)),$ and for smallest $i,$ the arc $(v_i, v_n)$ is defined, we have that in the underlying graph $J^*_n(f(x))$, $d(v_i) \leq f(i)$ and $d(v_j) \leq d(v_i)$ for all $j > i$. So it follows that $d(v_i) = \Delta(J_n(f(i)))$ so $v_i$ is the prime Jaconian vertex of $J_n(f(x)).$ 
\end{proof}
\begin{lemma}
For all Jaco graphs $J_n(f(x)),$ $n\geq2$ and, $v_i, v_{i-1}\in V(J_n(f(x)))$ we have in the Jaco graph $J^*_n(f(x)),$ that $|(d(v_i) - d(v_{i-1})|\leq  m.$ 
\end{lemma}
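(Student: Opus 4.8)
The plan is to reduce the statement to an elementary inequality by first deriving a closed form for the vertex degrees in the truncated graph. The key point is that lobbing off the vertices $v_t$, $t>n$, removes only \emph{out}-arcs: for $v_i$ with $i\le n$, an arc $(v_j,v_i)$ with $j<i$ belongs to $J_n(f(x))$ precisely when $(f(j)+j)-d^-(v_j)\ge i$, a condition that does not mention $n$. Hence, by induction on the index, the in-degree $d^-(v_i)$ of a vertex of $J_n(f(x))$ coincides with its in-degree in $J_\infty(f(x))$ (this is Property 4), and in particular Lemma 1.1(b) applies, giving $d^-(v_i)\in\{d^-(v_{i-1}),\,d^-(v_{i-1})+1\}$.

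Next I would count the out-arcs of $v_i$ in $J_n(f(x))$. These go exactly to the vertices $v_k$ with $i<k\le n$ and $k\le (f(i)+i)-d^-(v_i)$; since $d^-(v_i)\le f(i)$ by Lemma 1.1(a) and $i\le n$, their number is
\[
d^+(v_i)=\min\{(f(i)+i)-d^-(v_i),\ n\}-i=\min\{f(i)-d^-(v_i),\ n-i\}.
\]
Because every edge incident with $v_i$ is accounted for exactly once among its in-arcs and out-arcs, the degree of $v_i$ in the undirected graph $J^*_n(f(x))$ is $d(v_i)=d^-(v_i)+d^+(v_i)$, and the displayed line simplifies this to
\[
d(v_i)=\min\{\,f(i),\ d^-(v_i)+(n-i)\,\}.
\]

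It then remains to compare $d(v_i)$ with $d(v_{i-1})$. Writing $a_i=f(i)=mi+c$ and $b_i=d^-(v_i)+(n-i)$, we have $a_i-a_{i-1}=m$, and since $d^-(v_i)-d^-(v_{i-1})\in\{0,1\}$ we get $b_i-b_{i-1}=d^-(v_i)-d^-(v_{i-1})-1\in\{-1,0\}$, i.e. $b_{i-1}-1\le b_i\le b_{i-1}$. I would finish by splitting on which argument attains $\min\{a_i,b_i\}$. If $\min\{a_i,b_i\}=a_i$, then $b_{i-1}\ge b_i\ge a_i>a_{i-1}$, so $\min\{a_{i-1},b_{i-1}\}=a_{i-1}$ and the two degrees differ by exactly $m$. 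If $\min\{a_i,b_i\}=b_i$ and $\min\{a_{i-1},b_{i-1}\}=b_{i-1}$, the difference is $b_i-b_{i-1}\in\{-1,0\}$. Finally, if $\min\{a_i,b_i\}=b_i$ and $\min\{a_{i-1},b_{i-1}\}=a_{i-1}$, then $b_i-a_{i-1}\le a_i-a_{i-1}=m$ and, using $a_{i-1}\le b_{i-1}$, also $b_i-a_{i-1}\ge b_{i-1}-1-a_{i-1}\ge -1$. In all three cases $|d(v_i)-d(v_{i-1})|\le m$.

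The only delicate point is the first step: one must be careful that the in-degrees really are unchanged under truncation, and then keep precise track of the $\min$ once the out-degree is capped at $n-i$. Once the formula $d(v_i)=\min\{f(i),\ d^-(v_i)+(n-i)\}$ is in hand, the conclusion follows at once from Lemma 1.1(b) and the trivial bound on a difference of minima. (Equivalently, by Corollary 1.2 one may rewrite the formula as $d(v_i)=\min\{f(i),\ n-c_{f(x),i}\}$ and argue from $c_{f(x),i}\in\{c_{f(x),i-1},\,c_{f(x),i-1}+1\}$.)
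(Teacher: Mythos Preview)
Your argument is correct. The formula $d(v_i)=\min\{f(i),\,d^-(v_i)+(n-i)\}$ is exactly right (truncation only removes out-arcs, so in-degrees are inherited from $J_\infty(f(x))$ and Lemma~1.1(b) applies), and your three-case analysis of the difference of minima is clean; the only implicit use of $m\ge 1$ is in bounding the absolute value by $m$ in the cases where the difference lies in $\{-1,0\}$ or in $[-1,m]$, which is fine since the paper assumes $m\in\Bbb N$.

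The paper's own proof is essentially a single sentence: it observes that $f(i)-f(i-1)=m$ and declares this to be the maximum possible value of $|d(v_i)-d(v_{i-1})|$. No justification is offered for why the degree gap in the \emph{truncated} graph cannot exceed the degree gap in the infinite graph, nor for why it cannot be large in the other direction. Your approach fills exactly this gap: by writing $d(v_i)$ as a minimum of an increasing sequence $a_i=f(i)$ (step $m$) and a non-increasing sequence $b_i=d^-(v_i)+(n-i)$ (step $-1$ or $0$), you make it transparent that the difference is trapped in $[-1,m]$. What you gain is an honest proof and, as a by-product, the sharper information that $d(v_i)-d(v_{i-1})\in\{-1,0\}\cup\{m\}$ up to the boundary case; what the paper's version gains is brevity, at the cost of leaving the reader to supply the argument you wrote out.
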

\begin{proof}
Clearly, $max|(d(v_i) - d(v_{i-1})| = f(i) - f(i-1) = mi +c - (m(i-1) + c) = m$ hence, $|(d(v_i) - d(v_{i-1})|\leq  m.$ 
\end{proof}
\begin{proposition}
For the Jaco graph $J_{f(i)}(f(x)),$ $i \geq f(1) + 1$ the prime Jaconian vertex is a vertex $v_j$, $j \leq i.$
\end{proposition}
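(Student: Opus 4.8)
The plan is to pin down the degree of each vertex $v_j$ with $j \geq i$ in the finite graph $J_N(f(x))$, where $N := f(i)$, to show that this degree is non-increasing in $j$, and to deduce that the maximum degree of $J_N(f(x))$ is already realized by some vertex of index $\leq i$; since the prime Jaconian vertex is the \emph{lowest}-indexed vertex of maximum degree, this forces its index to be $\leq i$.

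First I would record, from Property 4 and Corollary 1.2, that the in-degree of each vertex is inherited unchanged from $J_\infty(f(x))$ into $J_N(f(x))$ (the value $d^-(v_k)$ depends only on $v_1,\dots,v_{k-1}$), so the arc set of $J_N(f(x))$ on $v_1,\dots,v_N$ is exactly that of $J_\infty(f(x))$ restricted to these vertices; in particular $d^-_{J_N(f(x))}(v_j)=d^-(v_j)=j-c_{f(x),j}$. Next, I would observe that $i\geq f(1)+1\geq 2$, so Lemma 1.1(d) applies to every $v_j$ with $j\geq i$ and gives that, in $J_\infty(f(x))$, the out-arcs of $v_j$ run consecutively up to head index
$$j+d^+(v_j)=j+\big((m-1)j+c\big)+c_{f(x),j}=f(j)+c_{f(x),j}\ \geq\ f(i)+1\ =\ N+1,$$
using that $f$ is strictly increasing and $c_{f(x),j}\geq c_{f(x),1}=1$. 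Hence every arc $(v_j,v_k)$ with $j<k\leq N$ survives in $J_N(f(x))$, i.e.\ $d^+_{J_N(f(x))}(v_j)=N-j$ for all $i\leq j\leq N$.

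Combining the two computations, for $i\leq j\leq N$ we get $d_{J_N(f(x))}(v_j)=(N-j)+(j-c_{f(x),j})=N-c_{f(x),j}$, which by Corollary 1.2 (the sequence $(c_{f(x),j})$ is non-decreasing) is non-increasing in $j$; in particular $d_{J_N(f(x))}(v_j)\leq d_{J_N(f(x))}(v_i)$ whenever $i\leq j\leq N$. Finally, letting $v_p$ be the prime Jaconian vertex, I would split into two cases: if $\Delta(J_N(f(x)))=d_{J_N(f(x))}(v_i)$, then $v_i$ itself is a Jaconian vertex and $p\leq i$; if $\Delta(J_N(f(x)))>d_{J_N(f(x))}(v_i)$, then by the previous line no vertex of index $\geq i$ attains $\Delta(J_N(f(x)))$, so $p<i$. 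Either way $p\leq i$, as claimed.

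The main obstacle — really the only non-routine point — is the verification in the second step that the truncation from $J_\infty(f(x))$ to $J_N(f(x))$ leaves the arcs among $v_1,\dots,v_N$ untouched and cuts the out-neighbourhoods of the vertices $v_j$, $j\geq i$, off precisely at $v_N$; once that is in place, everything else is arithmetic with the closed forms for $d^+$ and $d^-$ supplied by Lemma 1.1 and Corollary 1.2. One could alternatively try to route the argument through Lemma 2.3, but that lemma requires $d(v_i)=f(i)$, which need not hold for the prime Jaconian vertex of $J_{f(i)}(f(x))$ in general, so the direct degree count above seems cleaner.
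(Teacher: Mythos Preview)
Your argument is correct. The key computation $d_{J_N}(v_j)=N-c_{f(x),j}$ for $i\le j\le N$ (with $N=f(i)$) is justified exactly as you say: the out-neighbours of $v_j$ in $J_\infty(f(x))$ are $v_{j+1},\dots,v_{j+d^+(v_j)}$ by Definition~1.1, and Lemma~1.1(d) together with $c_{f(x),j}\ge c_{f(x),1}=1$ gives $j+d^+(v_j)=f(j)+c_{f(x),j}\ge f(i)+1>N$, so the truncation removes no arc with both ends in $\{v_1,\dots,v_N\}$. The monotonicity of $(c_{f(x),j})$ from Corollary~1.2 then finishes the job cleanly.

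Your route differs from the paper's. The paper proceeds by a case analysis on $m=f(i)-f(i-1)$ and on whether $d^-(v_i)=d^-(v_{i-1})$ or $d^-(v_{i-1})+1$, comparing only the two vertices $v_{i-1}$ and $v_i$ in $J_{f(i)}$ and asserting in each subcase that one of them is the prime Jaconian vertex. That argument is more local and leaves implicit precisely the point you make explicit: why no vertex $v_j$ with $j>i$ can have strictly larger degree. Your approach, by computing the degree of \emph{every} vertex of index $\ge i$ via the closed forms of Lemma~1.1 and Corollary~1.2, avoids the case split entirely and gives a uniform bound $d_{J_N}(v_j)\le d_{J_N}(v_i)$ for all $j\ge i$; the conclusion about the prime Jaconian index then follows immediately. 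What the paper's approach would buy, were it completed, is slightly sharper information (it attempts to name the prime Jaconian vertex as $v_{i-1}$ or $v_i$ in each case), whereas your argument only yields the inequality $p\le i$ that the statement actually asks for.
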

\begin{proof}
We have that $f(i) - f(i-1) \geq 1.$ From Lemma 1.1(b) it follows that $d^-(v_i) = d^-(v_{i-1})$ or $ d^-(v_{i-1}) + 1.$ \\
Case 1: Let $f(i) - f(i-1) = 1.$\\
Subcase 1.1: Let $d^-(v_i) = d^-(v_{i-1}).$ It follows that $d_{J_{f(i)}}(v_{i-1}) = d_{J_{f(i)}}(v_i)+1$ hence $v_{i-1}$ is the prime Jaconian vertex. Let $j = i-1$, so $j < i.$\\
Subcase 1.2:  Let $d^-(v_i) = d^-(v_{i-1}) + 1.$ It follows that $d_{J_{f(i)}}(v_{i-1}) = d_{J_{f(i)}}(v_i)$ hence $v_{i-1}$ is the prime Jaconian vertex. Let $j = i-1$, so $j < i.$\\
Case 1: Let $f(i) - f(i-1) = 2.$\\
Subcase 2.1: Let $d^-(v_i) = d^-(v_{i-1}).$ It follows that $d_{J_{f(i)}}(v_{i-1}) = d_{J_{f(i)}}(v_i)$ hence $v_{i-1}$ is the prime Jaconian vertex. Let $j = i-1$, so $j < i.$\\
Subcase 1.2:  Let $d^-(v_i) = d^-(v_{i-1}) + 1.$ It follows that $d_{J_{f(i)}}(v_i) = d_{J_{f(i)}}(v_{i-1}) + 1$ hence $v_i$ is the prime Jaconian vertex. So equality holds.\\
By Similar reasoning it follows that for all $f(i) - f(i-1) \geq 3$ vetex $v_i$ is the prime Jaconian vertex hence, the result is settled.
\end{proof}
\noindent Note that the value of $\Delta (J_n(f(x)))$ might repeat itself as $n$ increases to $n+1$ and on an increase we always obtain, $\Delta (J_{n+1}(f(x))) = \Delta (J_n(f(x)))+ m$.
\begin{theorem}
The Jaco graph $J_k(f(x))$, $k= f((m+ c)+ 1) + 1$ is the smallest Jaco graph in $\{J_n(f(x)): n \in \Bbb {N}\}$ which has $\Delta (J_k(f(x))) = f((m+c) + 1)$ with unique Jaconian vertex $v_{(m+c) + 1}.$ 
\end{theorem}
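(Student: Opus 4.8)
Write $i_0:=m+c+1$ and $k:=f(i_0)+1$. The guiding observation is that in any $J_n(f(x))$ with $n\ge j$ the in-neighbourhood of $v_j$ is identical to the one it has in $J_\infty(f(x))$, while its out-neighbours are those of $J_\infty(f(x))$ truncated at index $n$; in symbols $d^-_{J_n}(v_j)=d^-(v_j)$ and $d^+_{J_n}(v_j)=\min\{d^+(v_j),\,n-j\}$, where $d^{\pm}(v_j)$ denote the degrees in $J_\infty(f(x))$ (compare Property 4 and Lemma 1.1). Hence $v_j$ attains $d_{J_n}(v_j)=f(j)$ exactly when $n\ge j+d^+(v_j)$, i.e.\ once the largest-indexed out-neighbour of $v_j$ lies in $J_n$.

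Everything then hinges on the arcs out of $v_1$. Since $d^-(v_1)=0$, Definition 1.1 gives $(v_1,v_j)\in A(J_\infty(f(x)))$ if and only if $f(1)+1=m+c+1\ge j$, that is, if and only if $j\le i_0$; in particular this holds \emph{with equality} at $j=i_0$ and fails at $j=i_0+1$. Because the in-neighbours of any vertex form a consecutive block ending at its immediate predecessor (Lemma 1.1(c)), this forces $d^-(v_j)=j-1$ for all $j\le i_0$, while the in-neighbours of $v_{i_0+1}$ exclude $v_1$ and hence number at most $i_0-1$; combined with $d^-(v_{i_0+1})\ge d^-(v_{i_0})=i_0-1$ from Lemma 1.1(b) we get $d^-(v_{i_0+1})=i_0-1$, and iterating $d^-(v_{j+1})\le d^-(v_j)+1$ yields $d^-(v_j)\le j-2$ for all $j\ge i_0+1$. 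Consequently $d^+(v_{i_0})=f(i_0)-(i_0-1)$, so the largest out-neighbour of $v_{i_0}$ is $v_{i_0+d^+(v_{i_0})}=v_{f(i_0)+1}=v_k$.

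Now I would read off $J_k(f(x))$. There $v_{i_0}$ is saturated (its terminal out-neighbour $v_k$ is present), so $d_{J_k}(v_{i_0})=f(i_0)$. For $j<i_0$, $d_{J_k}(v_j)\le f(j)\le f(i_0-1)=f(i_0)-m<f(i_0)$. For $i_0<j\le k$, the in-degree bound gives $d_{J_k}(v_j)\le d^-(v_j)+(k-j)\le(j-2)+(k-j)=k-2=f(i_0)-1<f(i_0)$. Hence $\Delta(J_k(f(x)))=f(i_0)$ with $v_{i_0}$ as its only Jaconian vertex. For minimality, let $n\le k-1=f(i_0)$: for $i_0<j\le n$ the same estimate gives $d_{J_n}(v_j)\le(j-2)+(n-j)=n-2\le f(i_0)-2$, and for $j\le i_0$ we have $d_{J_n}(v_j)\le f(j)\le f(i_0)$, with equality demanding $j=i_0$ and $v_{i_0}$ saturated, i.e.\ $n\ge i_0+d^+(v_{i_0})=k$, which is impossible. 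So $\Delta(J_n(f(x)))<f(i_0)$ whenever $n<k$; no such graph has maximum degree $f(i_0)$, and since $J_k(f(x))$ does have maximum degree $f(i_0)$ and sole Jaconian vertex $v_{i_0}$, it is the smallest graph with that property.

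The one delicate point is that the degree estimates for the vertices other than $v_{i_0}$ must come out \emph{strictly} below $f(i_0)$; both strict inequalities trace back to the single fact that $v_1$ leaves the common in-neighbourhood exactly at index $i_0+1$, which is what produces $d^-(v_j)\le j-2$ for $j>i_0$ and places the last out-neighbour of $v_{i_0}$ at $v_k$ rather than further along. Everything else is bookkeeping with Lemma 1.1 and the inequality $d(v_i)\le f(i)$.
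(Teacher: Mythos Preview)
Your proof is correct and takes a genuinely different route from the paper's. The paper proceeds by what it calls ``imbedded induction'': it first verifies the claim for $c=0$ by inducting on $m$, then for $c=1$ by inducting on $m$, and finally attempts a simultaneous induction on both $m$ and $c$, tracking how the Jaconian vertex shifts as one passes from $f(x)=\ell x+c_\ell$ to $(\ell+1)x+(c_\ell+1)$. Your argument instead computes degrees in $J_k(f(x))$ directly from the structural facts in Lemma~1.1: the single observation that $(v_1,v_j)$ is an arc precisely for $j\le i_0=m+c+1$ pins down $d^-(v_j)=j-1$ for $j\le i_0$ and $d^-(v_j)\le j-2$ for $j>i_0$, from which the location of $v_{i_0}$'s terminal out-neighbour and all the required strict inequalities fall out by bookkeeping. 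What this buys is a short, uniform argument that treats all $(m,c)$ at once and makes the mechanism transparent (the drop in in-degree at $i_0+1$ is exactly what isolates $v_{i_0}$ as the unique Jaconian vertex); the paper's induction, by contrast, is longer and requires comparing Jaco graphs for \emph{different} linear functions, which introduces more moving parts without adding insight here.
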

\begin{proof}
We will prove the result through \emph{imbedded} induction.\\
Let $c = 0$ so consider the linear function $ f(x) = mx.$ For $m = 1$ let $f'(x) = x.$ The graph $J_3(f'(x))$ is clearly the smallest Jaco graph in $\{J_n(f'(x)): n \in \Bbb N\}$ for which $\Delta (J_3(f'(x))) =(1 + 0) + 1 = 2$ and $\Bbb J(J_3(f'(x))) = \{v_{((1 + 0)+1)}\} = \{v_2\}.$ So the result holds for $m = 1.$\\ \\
Assume the result holds for $m= \ell,$ and let $f''(x) = \ell x.$ So for the smallest Jaco graph $J_l(f''(x)), l = \ell(\ell + 1) + 1$ we have that $\Delta (J_l(f''(x))) = \ell(\ell+1)$ and $\Bbb J(J_l(f''(x))) = \{v_{(\ell+1)}\}.$ \\ \\
Now consider the linear function $f'''(x) = (\ell + 1)x.$ In the Jaco graph $J_l(f''(x))$ the vertex $v_{(\ell+1)+1} = v_{(\ell+2)}$ has $d(v_{(\ell+2)}) = d(v_{(\ell+1)}) - 1.$ So in constructing the Jaco graph $J_l(f'''(x)),$ amongst others the arc $(v_1, v_{(m+2)})$ is linked.  So at least $v_{(\ell+1)}, v_{(\ell+2)} \in \Bbb J(J_l(f'''(x))).$ So $d(v_{(\ell+2)}) = \ell(\ell+1).$ If follows that the minimum number of additional vertices (smallest Jaco graph) say $t,$ to be added to $J_l(f'''(x))$ to obtain $d(v_{(\ell+2)}) = (\ell+1)(\ell+2)$ and $\Bbb J(J_{(l+t)}(f'''(x))) = \{v_{(\ell+2)}\}$ in $J_{(l+t)}(f'''(x))$ is given by $t = (\ell+1)(\ell+2) - \ell(\ell+1) = 2(\ell+1).$ 
The number of vertices of $J_l(f''(x))$ is given by $\ell(\ell+1) +1.$ Now $l + t = (\ell(\ell+1) + 1) +2(\ell+1) = (\ell+1)(\ell+2) + 1.$ \\ \\
Clearly at least $v_{(\ell+2)} \in \Bbb J(J_k(f'''(x))), k= l+t,$ and $v_{(\ell+1)} \notin \Bbb J(J_k(f'''(x)))$ since $\ell(\ell+1) < (\ell+1)(\ell+2).$ In the construction of the Jaco graph $J_l(f'''(x)),$ the arc $(v_1, v_{(\ell+3)})$ was not linked so $d(v_{(\ell+3)}) < d(v_{(\ell+2)})$ in $J_l(f'''(x)).$ So it follows that $d(v_{(\ell+3)}) < d(v_{(\ell+2)})$ in $J_k(f'''(x)), k = l + t.$ The latter implies that $J(J_k(f'''(x)))$ is the smallest such Jaco graph and $\Bbb J(J_k(f'''(x))) = \{v_{(\ell+2)}\}.$\\ \\
Hence the result holds for $m = \ell + 1$ implying it holds in general for $c = 0.$\\\\
Now let $c = 1,$ and $f(x) = mx + 1.$ For $m=1$ let $f'(x) = x + 1.$ Clearly $J_5(f'(x))$ is the smallest Jaco gaph in $\{J_n(f'(x)): n \in \Bbb N\}$ for which $\Delta (J_5(f'(x))) = f'(2) + 1 = 4$ and $\Bbb J(J_5(f'(x))) = \{v_{(2+1)}\} = \{v_3\}.$ So the result holds for $m = 1.$\\ \\
Assume it holds for $m = \ell,$ and let $f''(x) = \ell x + 1.$ So for the smallest Jaco graph $J_l(f''(x)), l = \ell^2 + 2\ell + 1$ we have that $\Delta (J_l(f''(x))) = \ell(\ell+2)$ and $\Bbb J(J_l(f''(x))) = \{v_{(\ell+2)}\}.$ \\ \\ 
Now consider the function $f'''(x) = (\ell+1)x + 1.$ In the Jaco graph $J_l(f''(x))$ the vertex $v_{(\ell+2)+1} = v_{\ell+3}$ has $d(v_{\ell+3}) = d(v_{\ell+2}) - 1.$ So in the construction of the Jaco graph $J_l(f'''(x)),$ amogst others the arc $(v_1, v_{\ell+3})$ is linked. So at least $v_{(\ell+2)}, v_{(\ell+3)} \in \Bbb J(J_l(f'''(x))).$ So $d(v_{(\ell+3)}) = \ell(\ell+2).$ If follows that the minimum number of additional vertices (smallest Jaco graph) say $t,$ to be added to $J_l(f'''(x))$ to obtain $d(v_{(\ell+3)}) = (\ell+2)(\ell+3)$ and $\Bbb J(J_{(l+t)}(f'''(x))) = \{v_{(\ell+3)}\}$ in $J_{(l+t)}(f'''(x))$ is given by $t = (\ell+2)(\ell+3) - \ell(\ell+2) = 2(\ell+2).$ 
The number of vertices of $J_l(f''(x))$ is given by $(\ell+1)(\ell+2) +1.$ Now $l + t = ((\ell +1)(\ell+2) + 1) +2((\ell+1)+1) = (\ell+2)(\ell+3) + 1.$ \\ \\
Clearly at least $v_{(\ell+3)} \in \Bbb J(J_k(f'''(x))), k= l+t,$ and $v_{(\ell+2)} \notin \Bbb J(J_k(f'''(x)))$ since $(\ell+1)(\ell+2) < (\ell+2)(\ell+3).$ In the construction of the Jaco graph $J_l(f'''(x)),$ the arc $(v_1, v_{(\ell+4)})$ was not linked so $d(v_{(\ell+4)}) < d(v_{(\ell+3)})$ in $J_l(f'''(x)).$ So it follows that $d(v_{(\ell+4)}) < d(v_{(\ell+3)})$ in $J_k(f'''(x)), k = l + t.$ The latter implies that $J(J_k(f'''(x)))$ is the smallest such Jaco graph and $\Bbb J(J_k(f'''(x))) = \{v_{(\ell+3)}\}.$\\ \\
Hence the result holds for $m = \ell + 1$ implying it holds in general for $c = 1.$\\\\
Now assume the result holds for $1 \leq m \leq \ell_1$ and $0 \leq c \leq c_\ell$. This means that for $f'(x) = \ell_1 x + c_\ell$ the smallest Jaco graph $J_l(f'(x))$ has $\Delta (J_l(f'(x))) = f'((\ell_1 + c_\ell) +1) = \ell_1(\ell_1 + c_\ell) + 1) + c_\ell.$ and $\Bbb J)J_l(f'(x))) = \{v_{(\ell_1 + c_\ell) + 1}\}.$ Now consider the linear function $f''(x) = (\ell_1 + 1)x + (c_\ell + 1).$\\\\
In the Jaco graph $J_l(f'(x))$ the vertex $v_{(\ell+ c_\ell+2)+1} = v_{\ell+ c_\ell+3}$ has $d(v_{\ell+ c_\ell+3}) = d(v_{\ell + c_\ell +2}) - 1.$ So in the construction of the Jaco graph $J_l(f''(x)),$ amogst others the arc $(v_1, v_{\ell + c_\ell+3})$ is linked. So at least $v_{(\ell + c_\ell+2)}, v_{(\ell+ c_\ell+3)} \in \Bbb J(J_l(f''(x))).$ So $d(v_{(\ell+ c_\ell+3)}) = (\ell+c_\ell)(\ell + c_\ell+2).$ If follows that the minimum number of additional vertices (smallest Jaco graph) say $t,$ to be added to $J_l(f''(x))$ to obtain $d(v_{(\ell + c_\ell+3)}) = (\ell + c_\ell+2)(\ell+ c_\ell+3)$ and $\Bbb J(J_{(l+t)}(f''(x))) = \{v_{(\ell+c_\ell+3)}\}$ in $J_{(l+t)}(f''(x))$ is given by $t = (\ell+ c_\ell+2)(\ell+ c_\ell+3) - (\ell + c_\ell)(\ell + c_\ell+2) = 2(\ell + c_\ell+2).$ 
The number of vertices of $J_l(f'(x))$ is given by $(\ell + c_\ell+1)(\ell + c_\ell+2) +1.$ Now $l + t = ((\ell + c_\ell+1)(\ell + c_\ell+2) + 1) +2((\ell + c_\ell+1)+1) = (\ell + c_\ell+2)(\ell + c_\ell+3) + 1.$ \\ \\
Clearly at least $v_{(\ell + c_\ell+3)} \in \Bbb J(J_k(f''(x))), k= l+t,$ and $v_{(\ell + c_\ell+2)} \notin \Bbb J(J_k(f''(x)))$ since $(\ell + c_\ell+1)(\ell + c_\ell+2) < (\ell +c_\ell+2)(\ell + c_\ell+3).$ In the construction of the Jaco graph $J_l(f''(x)),$ the arc $(v_1, v_{(\ell = c_\ell+4)})$ was not linked so $d(v_{(\ell + c_\ell+4)}) < d(v_{(\ell + c_\ell+3)})$ in $J_l(f''(x)).$ So it follows that $d(v_{(\ell + c_\ell+4)}) < d(v_{(\ell + c_\ell+3)})$ in $J_k(f''(x)), k = l + t.$ The latter implies that $J(J_k(f''(x)))$ is the smallest such Jaco graph and $\Bbb J(J_k(f''(x))) = \{v_{(\ell + c_\ell+3)}\}.$\\ \\
Hence the result holds for $m = \ell_1 + 1$ and $c_\ell + 1$ implying it holds in general for $f(x) = mx + c.$
\end{proof}
\section{Number of Arcs of the Finite Jaco Graphs $\{J_n(f(x)):f(x) = mx + c; x,m\in \Bbb N$ and $c \in \Bbb N_0\}$}
\noindent Note,  in [6,7] it was thought that Theorem 3.7 combined with Binet's formula perhaps amounts to a closed formula for $d^+(v_n).$ It is hoped that as a generalised case, a closed formula can be found for the number of arcs of a finite Jaco graph $J_n(f(x)).$ Theorem 3.8 provides a result for a subset of the family of Jaco graphs $J_n(f(x)).$ The algorithms discussed in Ahlbach et al.[1] suggest that a general closed formula might not be found. First we present results for $c=0$ hence for $f(x) = mx.$
\begin{proposition}
The number of arcs of a Jaco graph $J_{\ell}(mx) = \frac{1}{2}\ell(\ell -1)$ if $\ell \leq m+1.$
\end{proposition}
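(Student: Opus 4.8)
The plan is to show that under the hypothesis $\ell \le m+1$ the Jaco graph $J_\ell(mx)$ is in fact the complete graph $K_\ell$, after which the arc count reduces to the elementary identity $|A(K_\ell)| = \binom{\ell}{2} = \tfrac12\ell(\ell-1)$.

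First I would observe that for $f(x) = mx$ we have $f(1) = m$, so the hypothesis $\ell \le m+1$ is exactly $\ell \le f(1)+1$; this places us squarely in the range covered by Lemma 2.1. Applying that lemma with $i = \ell$ (its proof already establishes that $J^*_{f(1)+1}(f(x))$ is $K_{f(1)+1}$, and by the same argument $J^*_\ell(f(x)) = K_\ell$ for every $\ell \le f(1)+1$) immediately identifies $J_\ell(mx)$ with $K_\ell$. Alternatively, one can argue directly from Definition 2.1: since $d^-(v_1) = 0$, the arc $(v_1, v_j)$ is present whenever $(f(1)+1) - 0 = m+1 \ge j$, hence for all $j \le \ell$; a short induction on $j$ using properties (b) and (c) of the root digraph (equivalently Lemma 1.1(c)) then shows every arc $(v_i, v_j)$ with $i < j \le \ell$ exists, so no vertex is yet saturated and the digraph is complete.

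Having identified $J_\ell(mx)$ with $K_\ell$, the number of arcs is simply the number of unordered pairs of the $\ell$ vertices, namely $\tfrac12\ell(\ell-1)$, which finishes the proof.

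I do not expect any genuine obstacle here; the only point requiring a moment's care is the boundary case $\ell = m+1$, where $v_1$ attains out-degree exactly $m$ and sits at the threshold of saturation, so one must confirm the defining inequality still holds with equality (it does, the equality case supplying precisely the last arc $(v_1, v_{m+1})$). For $\ell < m+1$ the inequality is strict at $v_1$ and the conclusion is even more immediate.
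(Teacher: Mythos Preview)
Your proposal is correct and follows essentially the same line as the paper's proof: verify from Definition~2.1 that every arc $(v_i,v_j)$ with $i<j\le\ell$ is present (starting from $v_1$, using $d^-(v_1)=0$), conclude $J_\ell(mx)\cong K_\ell$, and read off $\epsilon=\tfrac12\ell(\ell-1)$. Your invocation of Lemma~2.1 is a legitimate shortcut, since its proof already contains the completeness argument the paper repeats here.
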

\begin{proof}
If $\ell = m+1$ then, $((m+1) +1).1 - d^-(v_1) > (m+1)$ so from definition 2.1 it follows that the arcs $(v_1,v_i), i= 2,3, ..., (m+1)$ exist. It then follows that all arcs $(v_i,v_j), i<j$ exist. So the underlying graph $J^*_{m+1}(m)$ is the complete graph $K_{m+1}$ hence, $\epsilon (J_{m+1}(m)) =  \frac{1}{2}m(m+1) = \frac{1}{2}\ell(\ell-1).$  The result follows similarly for $\ell < m+1.$
\end{proof}
\begin{theorem}
If for the Jaco graph $J_n(mx),$ we have $\Delta (J_n(mx)) = k,$ then  $\epsilon(J_n(mx)) = \epsilon(\Bbb{H}(J_n(mx))) + \sum \limits_{i=1}^{k}d^+(v_i). $
\end{theorem}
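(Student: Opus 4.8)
We read the hypothesis as stating that $v_k$ is the prime Jaconian vertex of $J_n(mx)$; equivalently, $k$ is the least index with $d(v_k)=\Delta(J_n(mx))$, so that, by Definition 2.4, $\mathbb{H}(J_n(mx))$ is the complete subgraph on $\{v_{k+1},v_{k+2},\dots,v_n\}$. The plan is to count $\epsilon(J_n(mx))$ by summing out-degrees over all vertices and then splitting that sum at $v_k$. Since every arc of a Jaco graph runs from a smaller index to a larger one, it is counted exactly once as an out-arc of its tail, so
\[
\epsilon(J_n(mx))=\sum_{i=1}^{n}d^+(v_i)=\sum_{i=1}^{k}d^+(v_i)+\sum_{i=k+1}^{n}d^+(v_i),
\]
and the whole statement reduces to proving $\sum_{i=k+1}^{n}d^+(v_i)=\epsilon(\mathbb{H}(J_n(mx)))$.

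The crux is to show that every vertex strictly beyond $v_k$ is ``saturated downward'': for each $i$ with $k<i\le n$, the out-neighbourhood of $v_i$ in $J_n(mx)$ is all of $\{v_{i+1},\dots,v_n\}$, so that $d^+(v_i)=n-i$. Suppose this fails for some such $i$. By Definition 2.1 the out-neighbours of $v_i$ form the initial segment $\{v_j:i<j\le (f(i)+i)-d^-(v_i)\}$ of $v_{i+1},v_{i+2},\dots$, cut off at $v_n$; moreover $d^-(v_i)$ does not change when the vertices past $v_n$ are lopped off (Property 4). Hence, if $v_i$ is not adjacent to $v_n$, then $v_i$ retains all the out-arcs it has in the root digraph $J_\infty(mx)$, so by Lemma 1.1(a) its degree in the underlying graph is $d(v_i)=d^-(v_i)+d^+(v_i)=f(i)=mi$. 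But $i>k$ forces $mi>mk=f(k)\ge d(v_k)=\Delta(J_n(mx))$, which is impossible. This proves the claim, so $\{v_{k+1},\dots,v_n\}$ indeed induces a complete subgraph, $\mathbb{H}(J_n(mx))\cong K_{n-k}$, and
\[
\sum_{i=k+1}^{n}d^+(v_i)=\sum_{i=k+1}^{n}(n-i)=\binom{n-k}{2}=\epsilon(\mathbb{H}(J_n(mx))).
\]
Substituting into the first display gives $\epsilon(J_n(mx))=\epsilon(\mathbb{H}(J_n(mx)))+\sum_{i=1}^{k}d^+(v_i)$. Equivalently and more conceptually: the arcs of $J_n(mx)$ with tail of index greater than $k$ are exactly those with both endpoints of index greater than $k$, i.e.\ exactly the arcs of $\mathbb{H}(J_n(mx))$, while all remaining arcs are the out-arcs of $v_1,\dots,v_k$.

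The one step requiring genuine care — and the main obstacle — is the ``saturated downward'' claim, namely that every vertex after the prime Jaconian vertex reaches $v_n$; this is what guarantees that $\mathbb{H}(J_n(mx))$ is an induced clique and that $\sum_{i>k}d^+(v_i)$ is absorbed into $\epsilon(\mathbb{H}(J_n(mx)))$ with no double counting. It is precisely here that the hypothesis $d(v_k)=\Delta(J_n(mx))$, together with the strict monotonicity $f(i)>f(k)$ for $i>k$, is used. Neither Binet-type formulas nor the explicit values of $c_{f(x),n}$ enter, and the restriction $c=0$ is used only through $f(i)=mi$ in the displayed inequalities, so the same argument in fact yields the analogous formula for every $f(x)=mx+c$.
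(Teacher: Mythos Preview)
Your proof is correct and rests on the same decomposition as the paper's: arcs with tail index at most $k$ contribute $\sum_{i\le k}d^+(v_i)$, while the remaining arcs are precisely those of the Hope subgraph. Where the paper simply asserts the split and the value $\epsilon(\Bbb{H}(J_n(mx)))=\tfrac12(n-k)(n-k-1)$, you supply the missing justification via the ``saturated downward'' claim (every $v_i$ with $i>k$ reaches $v_n$), which is exactly what is needed to know that $\{v_{k+1},\dots,v_n\}$ really does induce a clique in $J_n(mx)$ and that no arc is double counted. Your reading of $k$ as the index of the prime Jaconian vertex, rather than literally $\Delta(J_n(mx))$, is both necessary (for $m\ge 2$ the two numbers differ, and the formula fails under the literal reading) and consistent with how the paper's own proof and its definition of $\Bbb{H}(J_n(mx))$ use $k$.
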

\begin{proof} 
For $n=1,$ $d^+(v_1) = 0$ and $J_1(mx)$ is the arcless graph on vertex $v_1$ whilst $\Bbb{H}(J_1(mx)),$ is an empty graph so $A(\Bbb{H}(J_1(mx))) =\emptyset,$ implying  $\epsilon(\Bbb{H}(J_1(mx))) = 0.$ Thus the result holds.\\ \\
For $n = 2,$ the Jaco graph $J_2(mx)$ has the prime Jaconian vertex $v_1$ and $d^+(v_1) = 1.$ $\Bbb{H}(J_2(mx))$ is the null graph on vertex $v_2$ so $A(\Bbb{H}(J_2(a)))=\emptyset,$ implying  $\epsilon(\Bbb{H}(J_2(mx))) = 0.$ Thus the result holds.\\ \\
Now assume it holds for all vertices $v_i,$ $i\leq k-1.$ Thus vertex $v_k$ has attained its in-degree $d^-(v_k).$ To attain $d(v_k) = \Delta (J_n(mx)),$ exactly $\Delta (J_n(mx))-d^-(v_k) = d^+(v_k)$ arcs can be linked additionally. So the result holds for vertices $v_1, v_2, v_3,\cdots, v_k.$\\ \\
Clearly we also have $A(\Bbb{H}(J_n(mx)))\subset A(J_n(mx)).$ Hence, $\epsilon(J_n(mx))= \epsilon(\Bbb{H}(J_n(mx))) + d^+(v_1) + d^+(v_2) + d^+(v_3) +\cdots+ d^+(v_k).$ \\
Since it is known that $\epsilon(\Bbb{H}(J_n(mx))) = 1 + 2 + 3 +\cdots + (n - \Delta (J_n(mx))-1)$ the result can be written as  $\epsilon(J_n(mx)) =\frac{1}{2}(n-k)(n-k-1)+ \sum \limits_{i=1}^{k}d^+(v_i). $
\end{proof}
\begin{corollary}
The number of arcs of a Jaco graph $J_n(mx)$ having vertex $v_i$ as the prime Jaconian vertex, can also be expressed recursively as
\begin{equation*} 
\epsilon (J_{n+1}(mx)) =
\begin{cases}
\epsilon(J_n(mx)) - i + n &\text {if $d(v_i) = mi$}\\
\epsilon(J_n(mx)) - i + (n+1) & \text {if $d(v_i) < mi$}\\ 
\end{cases}
\end{equation*} 
\end{corollary}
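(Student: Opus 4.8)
The plan is to pass from $J_n(mx)$ to $J_{n+1}(mx)$ by adjoining the single vertex $v_{n+1}$, to count the arcs this creates, and then to express that count in terms of the prime Jaconian vertex $v_i$ of $J_n(mx)$.

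\textbf{Step 1 (a one-step identity).} The defining condition $(f(a)+a)-d^-(v_a)\ge b$ for an arc $(v_a,v_b)$ refers only to the in-degree $d^-(v_a)$, which by Property 4 is determined by the vertices of index $<a$ and hence is unaffected when the order of the graph grows. So the arcs of $J_{n+1}(mx)$ are precisely those of $J_n(mx)$ together with the arcs having head $v_{n+1}$, whence
\[
\epsilon(J_{n+1}(mx))=\epsilon(J_n(mx))+d^-(v_{n+1}).
\]
By Lemma 1.1(c) the tails of the arcs into $v_{n+1}$ form a contiguous block $v_\ell,v_{\ell+1},\dots,v_n$ (the vertex $v_n$ is always such a tail, since its out-degree in $J_\infty(mx)$ is at least $1$), so $d^-(v_{n+1})=n+1-\ell$, and everything reduces to locating the least tail $v_\ell$.

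\textbf{Step 2 (saturated versus unsaturated vertices).} For $j\le n$ call $v_j$ \emph{saturated} (in $J_n(mx)$) if $d(v_j)=f(j)=mj$. Since $d^-(v_j)$ is the same in $J_n(mx)$ as in $J_\infty(mx)$ (Property 4), while the out-neighbours of $v_j$ in $J_n(mx)$ are $v_{j+1},\dots,v_{\min\{j+d^+(v_j),\,n\}}$ with $d^+(v_j)$ the out-degree in $J_\infty(mx)$, Lemma 1.1(a) shows that $v_j$ is saturated iff $j+d^+(v_j)\le n$, i.e.\ (Lemma 1.1(d)) iff $mj+c_{mx,j}\le n$. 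That quantity is nondecreasing in $j$ by Corollary 1.2, so the saturated vertices of $J_n(mx)$ form an initial segment $v_1,\dots,v_p$. Equivalently: $v_j$ is saturated exactly when $v_j$ is \emph{not} a tail of an arc into $v_{n+1}$ in $J_\infty(mx)$, and when $v_j$ is unsaturated it is a tail of arcs to every one of $v_{j+1},\dots,v_n,v_{n+1}$.

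\textbf{Step 3 (the two cases).} Because $v_i$ is the prime Jaconian vertex, $d(v_j)\le d(v_i)$ for all $j$. If $d(v_i)=mi$, then $v_i$ is saturated, while every $v_j$ with $j>i$ satisfies $d(v_j)\le d(v_i)=mi<mj$ and so is unsaturated; by Step 2 the tails of arcs into $v_{n+1}$ are then exactly $v_{i+1},\dots,v_n$ (a nonempty block, since $v_n$ is never saturated, hence $i<n$), giving $\ell=i+1$ and $d^-(v_{n+1})=n-i$. If instead $d(v_i)<mi$, then $v_i$ is unsaturated and so \emph{is} a tail into $v_{n+1}$; and when $i\ge 2$ the vertex $v_{i-1}$ must be saturated, for otherwise both $v_{i-1},v_i$ are unsaturated, so $d(v_{i-1})=d^-(v_{i-1})+(n-i+1)$ and $d(v_i)=d^-(v_i)+(n-i)$, whence $d(v_i)-d(v_{i-1})=\bigl(d^-(v_i)-d^-(v_{i-1})\bigr)-1\le 0$ by Lemma 1.1(b), contradicting minimality of $i$ among the maximum-degree vertices. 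Hence $v_1,\dots,v_{i-1}$ are saturated and $v_i,\dots,v_n$ are not (Step 2), so $\ell=i$ and $d^-(v_{n+1})=n+1-i$ (the case $i=1$ needs no exclusion). Substituting the two resulting values of $d^-(v_{n+1})$ into the identity of Step 1 yields the stated recursion.

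\textbf{Main obstacle.} The delicate point is the second case of Step 3 — showing that the prime Jaconian vertex is immediately preceded by a saturated vertex. This is precisely where the bound $d^-(v_j)-d^-(v_{j-1})\le 1$ from Lemma 1.1(b) must be played off against the minimality built into the definition of the prime Jaconian vertex; the only other care required is the boundary case $i=1$.
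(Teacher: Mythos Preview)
Your proof is correct and follows the same approach as the paper: pass from $J_n$ to $J_{n+1}$ and count the new arcs into $v_{n+1}$, showing that their tails are exactly $v_{i+1},\dots,v_n$ in the saturated case and $v_i,\dots,v_n$ otherwise. The paper's own proof merely asserts these tail sets without justification, whereas your Steps~2--3 (the initial-segment structure of saturated vertices and the use of Lemma~1.1(b) to force $v_{i-1}$ saturated in the second case) supply the missing verification.
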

\begin{proof}
Consider the Jaco graph $J_n(mx)$ having vertex $v_i$ as the Jaconian vertex.\\ \\
Case1:  If $ d(v_i) = mi$ and the vertex $v_{(n+1)}$ is added to construct $J_{(n+1)}(f(x))$ only the arcs 
$(v_{(i+1)}, v_{(n+1)}), .... (v_n, v_{(n+1)})$ can be linked additionally. This amounts to $(n - i)$ arcs.\\ \\
Case 2: If $ d(v_i) < mi$ and the vertex $v_{(n+1)}$ is added to construct $J_{(n+1)}(mx)$ only the arcs 
$(v_i, v_{(n+1)}), (v_{(i+1)}, v_{(n+1)}), .... (v_n, v_{(n+1)})$ can be linked additionally. This amounts to $(n - i +1)$ arcs. 
\end{proof}
\begin{lemma}(Also see [7].)
\noindent We find for $m=1$ and the series $(a_n)_{n \in \Bbb N_0}$ defined by
\begin{center} $a_0 = 0, a_1 = 1, a_{n \geq 2} = \min \{ k < n: k + a_k \geq n \},$
\end{center}
that Lemma 1.1 changes to: \\
\noindent (a) $d^+(v_n) + d^-(v_n) = n.$\\
(b) $d^-(v_{n+1}) \in \{ d^- (v_n),\, d^-(v_n) + 1 \}.$\\
(c) If $(v_i, v_k) \in A(J_\infty(x))$ and $i < j < k,$ then $(v_j, v_k) \in A(J_\infty(x)).$\\
(d) $d^+ (v_n) = a_n,$ $ n \geq 2.$
\end{lemma}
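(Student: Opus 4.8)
The plan is to obtain this statement as the immediate specialization of Lemma 1.1 to the function $f(x) = x$, i.e. the case $m = 1$, $c = 0$. First I would substitute $f(n) = n$ into Lemma 1.1(a) to get $d^+(v_n) + d^-(v_n) = n$, which is (a). Parts (b) and (c) of Lemma 1.1 are stated with no reference to $m$ or $c$, so they transcribe verbatim to (b) and (c) here; nothing further is needed for those.

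The only point requiring a genuine argument is (d). Here I would first show that the series $(c_{f(x),n})_{n \in \Bbb N_0}$ of Definition 1.2 — evaluated at $m = 1$, $c = 0$ — coincides with the series $(a_n)_{n \in \Bbb N_0}$ defined in the statement. This is a routine induction on $n$: the base cases agree since $c_{f(x),0} = 0 = a_0$ and $c_{f(x),1} = 1 = a_1$, and for $n \geq 2$, assuming $c_{f(x),k} = a_k$ for all $k < n$, the recurrence $c_{f(x),n} = \min \{ k < n : mk + c_{f(x),k} \geq n \}$ becomes $\min \{ k < n : k + a_k \geq n \}$, which is exactly $a_n$. Corollary 1.2 guarantees these minima are well-defined, so the induction goes through. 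Once this identification is in hand, Lemma 1.1(d) reads $d^+(v_n) = ((1-1)n + 0) + c_{f(x),n} = c_{f(x),n} = a_n$ for $n \geq 2$, which is precisely (d).

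There is essentially no obstacle here: the substantive content is already contained in Lemma 1.1, and the remaining work is just to check that Definition 1.2 degenerates to the advertised recurrence when $m = 1$ and $c = 0$. The one place to be mildly careful is to confirm that dropping the additive constant $c$ does not disturb the seed value $c_{f(x),1} = 1$ — it does not, since that value is assigned outright in Definition 1.2, independently of $m$ and $c$ — so that the two recurrences genuinely produce the same sequence term by term from the start. With that verified, the four items follow in the order (a), then (b)–(c), then (d).
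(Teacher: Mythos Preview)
Your proposal is correct and matches the paper's approach: the paper does not supply a separate proof for this lemma at all, presenting it instead as the direct specialization of Lemma~1.1 to $m=1$, $c=0$ (the very wording ``Lemma~1.1 changes to'' signals this). Your explicit verification that $(c_{f(x),n})$ coincides with $(a_n)$ in this case, and the substitution into Lemma~1.1(d), simply make explicit what the paper leaves implicit.
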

\begin{corollary}
Note that $(a))$ and $(c)$ above entail that $d^+(v_{n+1}) = n+1- d^-(v_{n+1}) \in \{n-d^-(v_n), n-d^-(v_n) +1\}$ and that $(d)$ then implies tat the series $(a_n)$ is well defined and ascending, more specifically, $a_{n+1} \in \{a_n, a_n+1\},$ $(n\in \Bbb N_0).$
\end{corollary}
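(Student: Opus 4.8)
This is the specialization to $m = 1$, $c = 0$ of the reasoning behind Corollary 1.2, now carried out with Lemma 3.4 in place of Lemma 1.1 and phrased in terms of out-degrees. First I would establish the displayed inclusion. Lemma 3.4(a) applied at $n+1$ gives $d^+(v_{n+1}) = (n+1) - d^-(v_{n+1})$, while Lemma 3.4(b) gives $d^-(v_{n+1}) \in \{d^-(v_n),\, d^-(v_n)+1\}$; substituting the two cases and using $(n+1) - d^-(v_n) = (n - d^-(v_n)) + 1$ yields $d^+(v_{n+1}) \in \{n - d^-(v_n),\, n - d^-(v_n) + 1\}$, as claimed.

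Next I would translate this into a statement about $(a_n)$. By Lemma 3.4(a) at $n$ (with the convention $d^-(v_1) = 0$) the set $\{n - d^-(v_n),\, n - d^-(v_n)+1\}$ equals $\{d^+(v_n),\, d^+(v_n)+1\}$; and Lemma 3.4(d), which identifies $d^+(v_k)$ with $a_k$ for $k \geq 2$, then converts the inclusion into $a_{n+1} \in \{a_n,\, a_n+1\}$ for every $n \geq 2$. The two remaining instances $n = 0$ and $n = 1$ of the claim are checked directly from $a_0 = 0$, $a_1 = 1$, and $a_2 = \min\{k < 2 : k + a_k \geq 2\} = 1$.

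For well-definedness of $(a_n)$ I would use induction on $n$, carrying along that $a_0, \dots, a_{n-1}$ are defined and weakly increasing. At stage $n \geq 2$ the candidate $k = n-1 < n$ satisfies $k + a_k = (n-1) + a_{n-1} \geq n$ — directly when $n = 2$ since $a_1 = 1$, and when $n \geq 3$ since $a_{n-1} = d^+(v_{n-1}) \geq (n-1) - (n-2) = 1$ — so the minimum defining $a_n$ is taken over a nonempty set and $a_n$ is well-defined; Lemma 3.4(d) then gives $a_n = d^+(v_n)$, and the computation above forces $a_n \in \{a_{n-1},\, a_{n-1}+1\}$, closing the induction.

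There is essentially no obstacle here, as the statement is a close analogue of Corollary 1.2; the only point requiring a little care is the order of the induction, so that the recursion for $a_n$ is seen to be well-posed (nonempty minimum) before Lemma 3.4(d) is invoked to evaluate it — precisely the bookkeeping already carried out for general $m$ in Corollary 1.2.
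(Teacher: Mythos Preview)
Your proposal is correct and follows essentially the same route as the paper, which gives no separate proof but embeds the argument in the corollary's wording (exactly as in the general Corollary~1.2). You correctly invoke parts (a) and (b) of Lemma~3.4; the paper's ``$(a))$ and $(c)$'' is evidently a typo for ``$(a)$ and $(b)$,'' since the parallel Corollary~1.2 explicitly cites (a) and (b), and part (c) plays no role in the degree computation. Your added care about the order of the induction---checking that the minimum defining $a_n$ is nonempty before invoking 3.4(d)---is more than the paper spells out, but entirely in the spirit of what Corollary~1.2 asserts.
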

\begin{lemma}
Let $i \in\Bbb N.$ Then $d^+(v_{i+d^+(v_i)}) = i = d^+(v_{i+d^+(v_{i+d^+(v_{i-1})})}).$
\end{lemma}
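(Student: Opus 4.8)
The plan is to convert this degree identity---stated through iterated out‑degrees---into a statement about the \emph{in}-neighbourhood of a single vertex, where Lemma 1.1(c) supplies a rigid interval structure. Here $f(x)=x$, so by Definition 1.1 the out-neighbours of $v_k$ are exactly $v_{k+1},\dots,v_{k+d^{+}(v_k)}$, while by Lemma 1.1(c) the in-neighbours of $v_k$ form an interval $v_{\ell},v_{\ell+1},\dots,v_{k-1}$ for some $\ell=\ell(k)$, so that $d^{-}(v_k)=k-\ell(k)$. Since Lemma 1.1(a) reads $d^{+}(v_k)+d^{-}(v_k)=k$, we get $d^{+}(v_k)=k-d^{-}(v_k)=\ell(k)$, giving the reformulation I will use throughout (valid for $k\ge 2$):
\[
d^{+}(v_M)=i \quad\Longleftrightarrow\quad v_i \text{ is the smallest in-neighbour of } v_M .
\]
A second, equally soft, ingredient is that $n\mapsto d^{+}(v_n)$ is non-decreasing and grows by at most $1$: indeed $d^{+}(v_n)-d^{+}(v_{n-1}) = 1-\bigl(d^{-}(v_n)-d^{-}(v_{n-1})\bigr)\in\{0,1\}$ by Lemma 1.1(a),(b).

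Granting these, the first equality is immediate. Put $M=i+d^{+}(v_i)$; then $v_M$ is the largest out-neighbour of $v_i$, so $(v_i,v_M)\in A(J_\infty(x))$, whereas $(v_{i-1},v_M)\in A(J_\infty(x))$ would force $i+d^{+}(v_i)=M\le(i-1)+d^{+}(v_{i-1})$, i.e.\ $d^{+}(v_{i-1})\ge d^{+}(v_i)+1$, contradicting the unit-growth bound. Hence $v_i$ is the smallest in-neighbour of $v_M$, and the reformulation gives $d^{+}(v_M)=i$. For the right-hand side I would unwind the iterated expression from the innermost term inward; the core reduction is $d^{+}(v_N)=i$ for $N:=i+d^{+}(v_{i-1})$, proved by the identical two arc checks: $N\le i+d^{+}(v_i)$ because $d^{+}(v_{i-1})\le d^{+}(v_i)$, so $(v_i,v_N)\in A(J_\infty(x))$, while $(v_{i-1},v_N)\notin A(J_\infty(x))$ since $N>(i-1)+d^{+}(v_{i-1})$. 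Thus $v_i$ is again the smallest in-neighbour of $v_N$ and $d^{+}(v_N)=i$, and any remaining layer of the nested expression is to be peeled off and handled by the same pair of arc checks. The small cases $i\le f(1)+1$, where the graph is complete and $v_{i-1}$ need not exist, should be verified directly, using $d^{+}(v_1)=f(1)=1$ and the convention $a_0=0$.

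The crux is the reformulation in the first paragraph: once $d^{+}$-values are read off as indices of minimal in-neighbours, each claimed equality becomes a two-line arc-existence computation. Accordingly, the only point needing genuine vigilance---and the main obstacle---is confirming, at the \emph{outermost} index produced by the nested $d^{+}$-expression, that $v_i$ is an in-neighbour of that vertex while $v_{i-1}$ is not; this is exactly where the monotonicity and unit-growth of $d^{+}$ enter, and careful bookkeeping of which vertex each layer lands on is essential. With those checks in place the statement collapses entirely to the minimal-in-neighbour characterization and nothing further remains to compute.
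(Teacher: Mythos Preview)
Your argument for the two identities $d^{+}(v_{i+d^{+}(v_i)})=i$ and $d^{+}(v_{i+d^{+}(v_{i-1})})=i$ is correct and is essentially the paper's own proof. The paper invokes Lemma~3.4(d), so that $d^{+}(v_n)=a_n=\min\{k<n:k+a_k\ge n\}$, which is exactly your ``smallest in-neighbour'' reformulation, and then uses the monotonicity of Corollary~3.5 to verify the same pair of inequalities you verify, namely $(i-1)+d^{+}(v_{i-1})<k$ and $i+d^{+}(v_i)\ge k$, for $k=i+d^{+}(v_i)$ and for $k=i+d^{+}(v_{i-1})$.

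One caution: the extra layer of nesting in the printed right-hand side is a typographical slip in the paper---its proof establishes only $d^{+}(v_{i+d^{+}(v_{i-1})})=i$ and stops. Your remark that ``any remaining layer is to be peeled off and handled by the same pair of arc checks'' would in fact fail. Once $d^{+}(v_N)=i$ is known for $N=i+d^{+}(v_{i-1})$, the outermost layer asks for $d^{+}(v_{i+i})=d^{+}(v_{2i})=i$, and your first arc check would require $(v_i,v_{2i})\in A(J_\infty(x))$, i.e.\ $d^{+}(v_i)\ge i$; but for $i\ge 2$ one has $d^{+}(v_i)=i-d^{-}(v_i)\le i-1$. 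Concretely, $i=3$ gives $d^{+}(v_6)=4\ne 3$. So the triply nested equality is simply false as written, and you should stop after the two un-nested identities, as the paper does.
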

\begin{proof}
let $i+d^+(v_i) = k.$ Certainly, $i+d^+(v_i) \geq k,$ so $d^+(v_k) \leq i.$ From Lemma 3.4 it follows that $i-1+d^+(v_{i-1}) \leq i-1+ d^+(v_i) < i+d^+(v_i)$ so $d^+(v_k) \geq i.$ Let $\ell = i+ d^+(v_{i-1}).$ Since $d^+(v_i) \geq d^+(v_{i-1})$ we have, $d^+(v_\ell) \leq i$ and since, $i-1+ d^+(v_{i-1}) < \ell$ we have, $d^+(v_\ell) =i.$
\end{proof}
The next theorem with proof is repeated here. It appears in [6, 7] which are announcement papers only. Applicable reading related to Fibonacci numbers and Zeckendorf representations is found in [5, 9].
\begin{theorem}
(Bettina's Theorem)(Also see: [6, 7]). Let $\Bbb{F} = \{f_0, f_1,f_2, ...\}$ be the set of Fibonacci numbers and let $n=f_{i_1} + f_{i_2} + ... + f_{i_r}, n\in \Bbb N$ be the Zeckendorf representation of $n.$ Then 
\begin{center}
$d^+(v_n) = f_{i_1-1} + f_{i_2-1} + ... +f_{i_r-1}.$
\end{center}
\end{theorem}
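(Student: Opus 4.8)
The plan is to recast the theorem as a statement about the single sequence $g(n):=d^+(v_n)$ (with $g(0):=0$), which by Lemma 3.4(d) equals $a_n$ for $n\ge 2$ and also for $n=1$ since $d^+(v_1)=f(1)=1=a_1$. I will lean on three facts already available in the case $m=1$: the recurrence $g(n)=\min\{k<n: k+g(k)\ge n\}$ for $n\ge 2$ (this is literally the defining relation of $(a_n)$ in Lemma 3.4), so in particular $k\mapsto k+g(k)$ is nondecreasing and $g(n)+g(g(n))\ge n$; the unit-step bound $g(n)-g(n-1)\in\{0,1\}$ from Corollary 3.5; and the ``section'' identity $g\bigl(i+g(i)\bigr)=i$ for $i\in\Bbb N$ from Lemma 3.6. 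Writing $h(i):=i+g(i)$, these say that $h$ is strictly increasing and that $g\circ h=\mathrm{id}$. I fix a Fibonacci indexing, say $f_0=1$, $f_1=2$, $f_k=f_{k-1}+f_{k-2}$, extended by $f_{-1}:=1$ (needed only to interpret $f_{i_r-1}$ when $f_0$ occurs in a representation), and for $n$ with Zeckendorf expansion $n=f_{i_1}+\cdots+f_{i_r}$ ($i_1>\cdots>i_r\ge 0$, no two indices consecutive) I set $Z(n):=f_{i_1-1}+\cdots+f_{i_r-1}$. The theorem then becomes the claim $g(n)=Z(n)$ for every $n\in\Bbb N$, which I will prove by strong induction on $n$.

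The base case is $n=1=f_0$: $g(1)=1=f_{-1}=Z(1)$. For the step, fix $n\ge 2$ and split on the smallest index $i_r$. If $i_r\ge 1$, put $m:=Z(n)=f_{i_1-1}+\cdots+f_{i_r-1}$. The indices $i_1-1>\cdots>i_r-1\ge 0$ are still non-consecutive, so this is the Zeckendorf expansion of $m$, and since $f_{i_r-1}<f_{i_r}$ we have $1\le m<n$; the induction hypothesis gives $g(m)=Z(m)=f_{i_1-2}+\cdots+f_{i_r-2}$. Now $f_{i_j}=f_{i_j-1}+f_{i_j-2}$ holds for every $i_j\ge 1$, so $h(m)=m+g(m)=\sum_{j}(f_{i_j-1}+f_{i_j-2})=\sum_j f_{i_j}=n$, and Lemma 3.6 yields $g(n)=g(h(m))=m=Z(n)$. (The sub-case $r=1$ here is the pure identity $g(f_k)=f_{k-1}$, obtained from $h(f_{k-1})=f_{k-1}+g(f_{k-1})=f_{k-1}+f_{k-2}=f_k$; the general Zeckendorf statement is its ``additive closure''.)

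The second case is $i_r=0$, i.e.\ $f_0$ appears in the expansion of $n$; write $n=n''+1$ with $n''=f_{i_1}+\cdots+f_{i_{r-1}}$. If $r=1$ then $n=1$ is already settled, so assume $r\ge 2$; every index of $n''$ is then $\ge 2$. Applying the previous case to $n''<n$ gives $g(n'')=Z(n'')$ and, in the course of that argument, $h(Z(n''))=n''$, that is $Z(n'')+g(Z(n''))=n''=n-1<n$. Because $k\mapsto k+g(k)$ is nondecreasing, no $k\le Z(n'')=g(n'')$ can satisfy $k+g(k)\ge n$, so the recurrence forces $g(n)\ge g(n'')+1$. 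On the other hand Corollary 3.5 gives $g(n)\le g(n-1)+1=g(n'')+1$. Hence $g(n)=g(n'')+1=Z(n'')+f_{-1}=Z(n)$, which closes the induction.

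I expect the main obstacle to be exactly this last case. Lemma 3.6 delivers the values of $g$ on the image of $h$ essentially for free, and when $i_r\ge 1$ the point $n$ itself lies in that image; but when the Zeckendorf expansion of $n$ contains $f_0$, it is $n-1$ rather than $n$ that sits in the image of $h$, so one must separately certify that $g$ genuinely jumps from $n-1$ to $n$ --- and that is precisely what the monotonicity of $k\mapsto k+g(k)$ together with the unit-step bound of Corollary 3.5 supplies. A minor bookkeeping point to settle first is the Fibonacci indexing at the bottom end (the role of $f_{-1}$, equivalently whether the chosen Zeckendorf convention admits the smallest Fibonacci number $1$), so that $Z(n)$ is well defined; this must be pinned down before the induction but does not affect its structure.
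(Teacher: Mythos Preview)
Your proof is correct and follows essentially the same architecture as the paper's: strong induction on $n$, split on whether the smallest Zeckendorf index is minimal or not, use Lemma~3.6 (the identity $g(h(m))=m$) to handle the first case directly, and in the second case sandwich $g(n)$ between $g(n-1)$ and $g(n-1)+1$ via Corollary~3.5. The only substantive differences are cosmetic: you use the shifted Fibonacci indexing $f_0=1,\ f_1=2$ (with $f_{-1}:=1$) where the paper uses the standard $f_1=f_2=1$, and in the second case you justify $g(n)>g(n-1)$ via the monotonicity of $k\mapsto k+g(k)$ whereas the paper uses that $k=a_{n-1}$ already rules out all $j<k$ by the defining minimality of $a_{n-1}$ --- both routes are equivalent one-liners.
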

\begin{proof}
Through induction we have that first of all, $1=f_2$ and $d^+(v_1) =1=f_1.$ Let $2 \leq n = f_{i_1} + f_{i_2} + ... + f_{i_r}$ and let $k = f_{i_1-1} + f_{i_2-1} + ... + f_{i_r-1}.$ If $i_r \geq 3,$ then $k = f_{i_1-1} + f_{i_2-1} + ... + f_{i_r-1}$ is the Zeckendorf representation of $k$, such that induction yields $d^+(v_k) = k = f_{i_1-2} + f_{i_2-2} + ... + f_{i_r-2}.$ Since $k + d^+(v_k)  = f_{i_1-1} + f_{i_1-2} + f_{i_2-1} + f{i_2-2} + ... f_{i_r-1} + f_{i_r-2} = f_{i_1} + f_{i_2} + ... f_{i_r} = n,$ read with Lemma 3.6 yields $d^+(v_n) = k.$ \\ \\
Finally consider $n=f_{i_1} + f_{i_2} + ... + f_{i_r}, i_r =2.$ Note that $n >1$ implies that $i_{r-1} \geq 4$ and that the Zeckendorf representation of $n-1$ given by $n-1 = f_{i_1} + f_{i_2} + ... + f_{i_{r-1}}.$ Let $k= d^+(v_{n-1}).$ Through induction we have that, $k = f_{i_1-1} + f_{i_2-1} + ... + f_{i_{r-1}-1},$ and since $i_{r-1} \leq 4,$ this is the Zeckendorf representation of $k.$ Accordingly, $d(v_k) = f_{i_1-2} + f_{i_2-2}+ ... + f_{i_{r-1}-2},$ and $k+ d^+(v_k) =  f_{i_1-1} + f_{i_1-2} + f_{i_2-1} + f{i_2-2} + ... f_{i_{r-1}-1} + f_{i_{r-1}-2} = n-1.$ It follows that $d^+(v_n) > k = d^+(v_{n-1}).$ From Corollary 3.5 it follows that $d^+(v_n) = k+1 = (f_{i_1-1} + f_{i_2-1} + ... + f_{i_{r-1}-1}) + f_1 = f_{i_1-1} + f_{i_2-1} + ... + f_{i_r-1}.$
\end{proof}
 Since $d^-(v_n) = n - d^+(v_n)$, the number of edges is also given by $\epsilon(J_n(1)) = \frac{1}{2}n(n+1) - \sum\limits^n_{i=1}d^+(v_i).$ Furthermore, for $n\geq 2$ we have $d^+(v_1) = 1$, so we rather consider $\epsilon (J_n(1)) =(\frac{1}{2}(n(n+1) - 1) - \sum\limits^n_{i=2}d^+(v_i).$ Bettina's theorem can be applied to the last term to determine the number of arcs of $J_n(x).$\\ \\
\textbf{Illustration 2.} For the Jaco Graph $J_{15}(x)$ we have;\\ $\epsilon (J_{15}(1)) = \frac{1}{2}.15.(15+1) -1 - \sum\limits^{15}_{i=2} = 119 - \sum\limits^{15}_{i=2}d^+(v_i).$\\ \\
\noindent Now, $f_2 = 1, f_3 = 2, f_4 = 3, f_4 + f_2 =4, f_5 = 5,f_5 + f_2 =6, f_5 + f_3=7, f_6 =8, f_6 + f_2=9, f_6 + f_3 = 10,  f_6 + f_4 = 11, f_6 +f_4 + f_2 = 12, f_7 = 13, f_7 + f_2 = 14,$ and $f_7 +f_3 = 15. $\\ \\
\noindent From Bettina's Theorem it follows that;\\ $\sum\limits^{15}_{i=2}d^+(v_i) = f_2 + f_3 + (f_3+f_1) + f_4 + (f_4+f_1) + (f_4 + f_2) + f_5 + (f_5 + f_1) + (f_5 + f_2) +(f_5 + f_3) + (f_5+ f_3+ f_1) +f_6 + (f_6 + f_1) + (f_6 + f_2) = 5f_1 + 4f_2 + 4f_3 + 3f_4 +5f_5 +3f_6 = f_1 +5f_3 +5f_5 +3f_7 = 75.$\\ \\
So, $\epsilon (J_{15}(1)) = 119 - 75 = 44.$\\ \\
Now we present a result for a special case in respect of $f(x) = mx + c.$
\begin{theorem}
For the Jaco graph $J_{f(f(1) + 1) + 1) +1}(f(x))$ we have that:\\ $\epsilon(J_{f(f(1) + 1) + 1) +1}(f(x))) = \epsilon(K_{f(1) + 1}) + \epsilon(K_{f(f(1)+1)) - f(1)}) + \frac{1}{2}mf(1)\cdot (f(1) - 1),$ alternatively,\\
$\epsilon(J_{m^2+m(c+1)+2}(f(x))) = \epsilon(K_{(m+c) + 1}) + \epsilon(K_{m^2+m(c-1) -c+2}) + \frac{1}{2}m(m+c)\cdot ((m+c) - 1).$ 
\end{theorem}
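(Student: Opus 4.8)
The plan is to recognise $J_{f(f(1)+1)+1}(f(x))$ as exactly the graph produced by Theorem 2.7 and then to count its arcs by grouping them around the prime Jaconian vertex. Write $q=f(1)+1$ and $N=f(q)+1$, so that the graph in the statement is $J_N(f(x))$. Since $q=(m+c)+1$, Theorem 2.7 says that $J_N(f(x))$ is the smallest Jaco graph with $\Delta(J_N(f(x)))=f(q)$ and that $v_q$ is its unique, hence prime, Jaconian vertex, with $d(v_q)=f(q)$. By Property 1 this forces $d(v_i)=f(i)$ in $J^*_N(f(x))$ for every $i\le q$, and by Definition 2.4 the Hope subgraph $\Bbb{H}(J_N(f(x)))$ is taken on the vertex set $\{v_{q+1},\dots,v_N\}$.

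Next I would partition $A(J_N(f(x)))$ into three blocks according to whether an arc lies inside $\{v_1,\dots,v_q\}$, lies inside $\{v_{q+1},\dots,v_N\}$, or bridges the two (tail below $v_{q+1}$ and head above $v_q$); there is no fourth case because arcs always run from a lower to a higher index. For the bottom block, for each $i\le q$ the out-neighbours of $v_i$ are the consecutive vertices $v_{i+1},\dots,v_{(f(i)+i)-d^-(v_i)}$, and since $d^-(v_i)\le i-1$ we get $(f(i)+i)-d^-(v_i)\ge f(i)+1\ge f(1)+1=q$; hence every pair inside $\{v_1,\dots,v_q\}$ is joined, so this block is $K_{f(1)+1}$ and contributes $\epsilon(K_{f(1)+1})$ arcs.

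For the top block, $v_q$ reaches $v_N$ because $q+d^+(v_q)=(f(q)+q)-d^-(v_q)=(f(q)+q)-(q-1)=f(q)+1=N$, so Lemma 1.1(c) propagates this to every pair in $\{v_{q+1},\dots,v_N\}$; the block is therefore complete on $N-q=f(q)-f(1)=f(f(1)+1)-f(1)$ vertices and contributes $\epsilon(K_{f(f(1)+1)-f(1)})=\epsilon(\Bbb{H}(J_N(f(x))))$ arcs. For the bridging arcs, for each $i\le q$ the whole of $\{v_1,\dots,v_{i-1}\}$ is the in-neighbourhood of $v_i$, so $d^-(v_i)=i-1$ and, by Lemma 1.1(a), $d^+(v_i)=f(i)-(i-1)$, the out-neighbours of $v_i$ running from $v_{i+1}$ up to $v_{f(i)+1}$; since $f(i)+1\le f(q)+1=N$ nothing is truncated in $J_N(f(x))$. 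Exactly the out-neighbours of index greater than $q$ bridge, giving $(f(i)+1)-q=f(i)-f(1)=m(i-1)$ of them, so the bridging arcs number $\sum_{i=1}^{q}m(i-1)=m\binom{q}{2}=\tfrac12 mf(1)(f(1)+1)$. Adding the three contributions gives
\[ \epsilon(J_{f(f(1)+1)+1}(f(x)))=\epsilon(K_{f(1)+1})+\epsilon(K_{f(f(1)+1)-f(1)})+\tfrac12 mf(1)(f(1)+1), \]
and substituting $f(1)=m+c$ and $f(f(1)+1)=m^2+mc+m+c$ turns $N$ into $m^2+mc+m+c+1$ and $f(f(1)+1)-f(1)$ into $m(m+c)$, yielding the closed form.

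The routine parts are the final summation and the substitution; the steps needing care are the two completeness claims — that $v_1,\dots,v_q$ form a clique and that $v_{q+1},\dots,v_N$ form a clique — which both hinge on computing $q+d^+(v_q)$ exactly and then invoking Lemma 1.1(c), together with the verification that truncation to $N$ vertices shortens no out-neighbourhood of a vertex $v_i$ with $i\le q$. I would also note that the bookkeeping above produces $\tfrac12 mf(1)(f(1)+1)$ for the last summand and $m(m+c)$ for the order of the second complete graph, so the statement is best recorded with those values and with the subscript $m^2+mc+m+c+1$.
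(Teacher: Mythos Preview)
Your argument is correct and follows the paper's approach exactly: decompose $J_N(f(x))$ into a lower clique on $\{v_1,\dots,v_q\}$, an upper clique, and the bridging arcs, then sum. The only difference is bookkeeping at the seam: the paper lets the two cliques \emph{share} the vertex $v_{f(1)+1}$ (``appending \dots\ at the common vertex''), so its upper clique sits on $\{v_{f(1)+1},\dots,v_N\}$ with $f(f(1)+1)-f(1)+1$ vertices and its bridging sum runs only over $2\le j\le f(1)$, producing $\tfrac12 mf(1)(f(1)-1)$; your clean split at $v_q$ versus $v_{q+1}$ moves the $m\,f(1)$ out-arcs of $v_q$ from the upper clique into the bridging term, which is why your last summand reads $\tfrac12 mf(1)(f(1)+1)$ and your upper clique has order $m(m+c)$ --- the two totals coincide, and your closing remarks about the stated indices are on point.
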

\begin{proof}
Clearly the Jaco graph $J_{f(1) + 1}(f(x))$ is the complete graph $K_{f(1) + 1} = K_{m+c+1},$ so the first term including the first alternative term, follows. Clearly the induced subgraph $\langle v_i: f(1) + 1 \leq i \leq f(f(1) + 1) + 1\rangle$ is the complete graph $K_{(f(f(1) + 1) + 1) - (f(1) + 1))} = K_{m^2 + m(c-1) - c +2},$ so the second term including the second alternative term, follows.\\\\
Consider the graph $J'$ to be the graph obtained by appending the two complete graphs above at the common vertex $v_{f(1) + 1}.$ To obtain the Jaco graph $J_{f(f(1) + 1) + 1) +1}(f(x))$ from graph $J'$ we have to add all defined arcs from vertices $v_j \in \{v_i: 2 \leq i \leq f(1)\}$ to vertices $v_k \in \{v_\ell: f(1) +2 \leq \ell \leq f (f(1) + 1)\}.$ Choose any such vertex $v_j$ and we have $d_{J'}(v_j) =f(1)= m+c.$ Relabel the vertex $v_j$ to carry index $j-1$ and denote the vertex $v^*_{j-1}.$ So now we have that $d_{J'}(v^*_{j-1}) = m+c.$ Since $d_{J_{f(f(1) + 1) + 1) +1}(f(x))}(v_j) = mj +c,$ the additional $mj +c - (m+c) = m(j-1)$ arcs must be added.\\\\
Hence, $\sum\limits_{i=2-1}^{m+c-1}mi = m\cdot \sum\limits_{i=2-1}^{m+c-1}i$ arcs are required. This simplifies to $m\cdot \sum\limits_{i=1}^{m+c-1}i = m\cdot(\frac{1}{2}(m+c)(m+c-1)$ arcs. By this the result is settled.
\end{proof}
\subsection{Linear Function Corresponding to a Linear Jaco Graph}
It is noted that if for a sufficiently large linear Jaco Graph $J_n(f(x))$ we have two vertices $v_i, v_j$ for which $d(v_i) = f(i)$ and $d(v_j)= f(j)$ then the linear function $f(x)$ can be derived by solving the simultaneous equations:
\begin{center}
$mi + c = d(v_i)$\\ $mj + c = d(v_j).$
\end{center}
The smallest linear Jaco graph for which this is possible is for $J_{f(2)+1}(f(x))$ hence, knowing that $d(v_2) = f(2)$ in the given linear Jaco graph.
\begin{proposition}
If for a linear Jaco graph we have that $d(v_i) = f(i)$ and $d(v_{i+1}) = f(i+1)$ then for maximum $i', j'$ for which the arcs $(v_i, v_{i'}), (v_{i+1}, v_{j'})$ exist, we have $j' - i' \in \{m, m+1\}.$
\end{proposition}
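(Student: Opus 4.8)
The plan is to translate everything into statements about out-degrees, where Lemma 1.1 does the heavy lifting. First I would observe that, by Definition 2.1, the largest index $i'$ with $(v_i,v_{i'}) \in A(J_n(f(x)))$ is exactly $i' = (f(i) + i) - d^-(v_i)$, provided the ambient graph is large enough to contain $v_{i'}$; and the hypothesis $d(v_i) = f(i)$ guarantees precisely this, since it says $v_i$ has already emitted all of its potential out-arcs. Using Lemma 1.1(a), $d^+(v_i) = f(i) - d^-(v_i)$, so $i' = i + d^+(v_i)$, and likewise the hypothesis $d(v_{i+1}) = f(i+1)$ gives $j' = (i+1) + d^+(v_{i+1})$. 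Hence $j' - i' = 1 + \bigl(d^+(v_{i+1}) - d^+(v_i)\bigr)$, and the whole proposition reduces to showing $d^+(v_{i+1}) - d^+(v_i) \in \{m-1, m\}$.

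Second, I would apply Lemma 1.1(d) in the form $d^+(v_n) = ((m-1)n + c) + c_{f(x),n}$, which (since $c_{f(x),1} = 1$ makes $((m-1)\cdot 1 + c) + 1 = m+c = f(1) = d^+(v_1)$) is in fact valid for every index occurring here. Subtracting the formula at $n = i+1$ and at $n = i$ gives
\[
d^+(v_{i+1}) - d^+(v_i) = (m-1) + \bigl(c_{f(x),i+1} - c_{f(x),i}\bigr).
\]
By Corollary 1.2 the series $(c_{f(x),n})$ is ascending with $c_{f(x),n+1} \in \{c_{f(x),n},\, c_{f(x),n}+1\}$, so the bracketed difference is $0$ or $1$. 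Therefore $d^+(v_{i+1}) - d^+(v_i) \in \{m-1, m\}$, and consequently $j' - i' = 1 + \bigl(d^+(v_{i+1}) - d^+(v_i)\bigr) \in \{m, m+1\}$, as claimed.

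The only genuinely delicate point is the first one: making sure that the stated maximum $i'$ (and $j'$) really equals $i + d^+(v_i)$ (and $(i+1) + d^+(v_{i+1})$) rather than being truncated by the finite order $n$ of the graph. I would dispose of this by noting that $d^-(v_i)$, and hence the full set of out-arcs $v_i$ wants to emit, is already determined by the vertices preceding $v_i$ (Property 4), so $d(v_i) = f(i)$ forces $n \geq i + d^+(v_i)$; the analogous remark for $v_{i+1}$ uses $d(v_{i+1}) = f(i+1)$. Once this is in place the argument is the short computation above, with no case analysis on $m$ or $c$ needed.
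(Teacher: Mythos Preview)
Your proof is correct and follows essentially the same line as the paper's: both reduce the question to $j' - i' = 1 + \bigl(d^+(v_{i+1}) - d^+(v_i)\bigr)$ and then show this lies in $\{m, m+1\}$. The only difference is that the paper gets $d^+(v_{i+1}) - d^+(v_i) \in \{m-1, m\}$ directly from $d(v_{i+1}) - d(v_i) = f(i+1) - f(i) = m$ together with Lemma~1.1(b), whereas you take the equivalent route through Lemma~1.1(d) and Corollary~1.2.
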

\begin{proof}
Let $d(v_i) = f(i)$ and $d(v_{i+1}) = f(i+1)$ in a sufficiently large linear Jaco graph $J_n(f(x)).$ Since $d(v_{i+1}) - d(v_i) = m(i+1) + c - mi -c = m$ and from Lemma 1.1(b) we have $d^-(v_{i+1}) \in \{d^-(v_i), d^-(v_i) +1\}$, the result follows.
\end{proof}
Each positive integer $k$ can be written as $k+1$ sums of non-negative integers $m+c$, $m\geq 0,c \geq 0.$ If in definition 1.1 we relax the lower  limit on $m$ and allow $m \geq 0$ we say for a given $k$ that the linear Jaco graphs corresponding to the functions $f_i(x) = m_ix + c_i$, $m_i + c_i = k$ and $ 1 \leq i \leq k+1,$ are $f$-related Jaco graphs.\\\\
For $m=0$ and $c \geq 0$ we have two special classes of disconnected linear Jaco graphs. For $c= 0$ the Jaco graph $J_n(0)$  is a null graph (\emph{edgeless graph}) on $n$ vertices. For $c= k > 0$, the Jaco graph $J_n(k) = \bigcup\limits_{\lfloor\frac{n}{k+1}\rfloor-copies}K_{k+1} \bigcup K_{n-(k+1)\cdot \lfloor\frac{n}{k+1}\rfloor}.$\\\\
\textbf{Illustration 3.} Figure 2 depicts the linear Jaco graph $J_{15}(3)= K_4\bigcup K_4 \bigcup K_4 \bigcup K_3.$\\\\\\\\\\\

\begin{figure}
\centering
\includegraphics[width=0.7\linewidth]{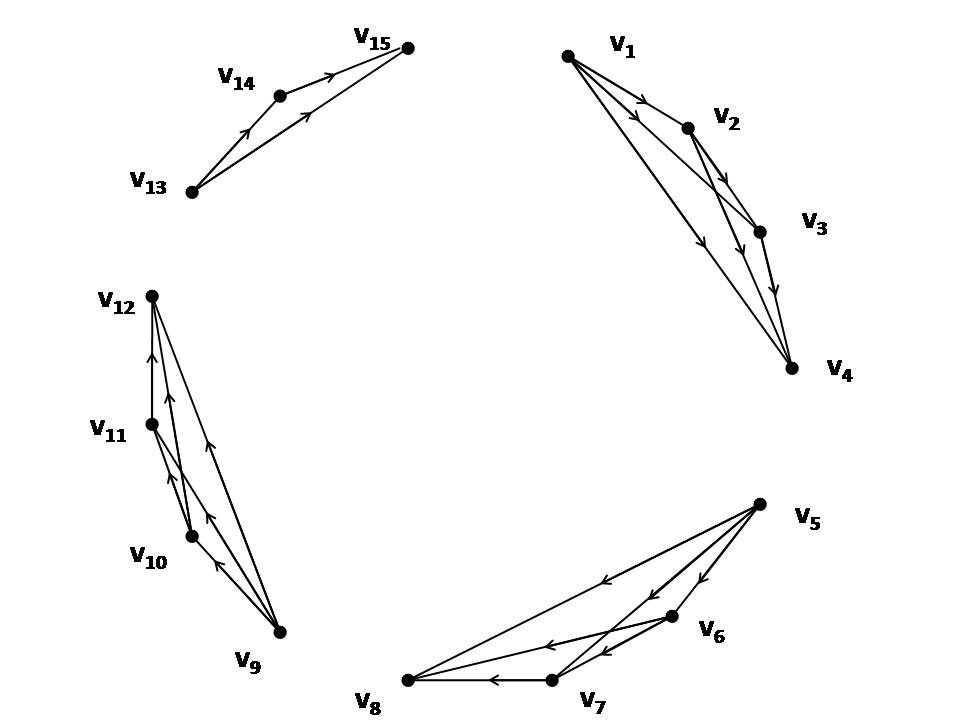}
\caption{}
\label{fig:Fig2}
\end{figure}

We note that a complete graph $K_n$ is a linear Jaco graph to any linear function $f(x) = mx +c$ if $n \leq m+c + 1.$ And, if $K_n$ is a $f$-related linear Jaco graph, the complete graph corresponds to any of the $m+c+1$ defined linear functions. It implies that we need at least a two-component subgraph of complete graphs say $K_s \bigcup K_t$ to derive the unique linear function $f(x) = k-1,$ $k = max\{s,t\}.$
\section{Conclusion}
Thus far all studies of Jaco graphs were indeed on linear Jaco graphs. We largely generalised the results announced in [6, 7]. Also note that the Jaco graphs $J_n(1)$ and $J_n(a)$ as discussed in [6, 7] has a slightly different meaning to that defined in this paper. This might make comparative reading somewhat confusing. The generalisation suggests uniform notation. Amongst the many open problems that exist within the family of Jaco graphs the authors view, finding a closed formula for the number of arcs (edges) of a Jaco graph the most challenging and interesting. Perhaps Theorem 3.8 lays the foundation to further seek such closed formula.\\\\
It is proposed that the generalisation of the concepts to polynomial Jaco graphs $J_n(f(x))$ where $f(x) = \sum\limits_{i=1}^{t}a_ix^i + c,$ $ a_i,x \in \Bbb N$ and $c \in \Bbb N_0,$ will be worthy to study. Definition 1.1 will have to change to define the orientation of edges resulting from $f(x) < 0.$ Also, given a sufficiently large polynomial Jaco graph, deriving the polynomial function can be formalised.\\\\
References \\ \\
$[1]$ C. Ahlbach, j. Usatine, N. Pippenger, \emph{Efficient Algorithms for Zerckendorf Arithmetic,} Fibonacci Quarterly, Vol 51, No. 13, (2013): pp 249-255. \\
$[2]$  J.A. Bondy and U.S.R. Murty, \textbf{Graph Theory with Applications,} Macmillan Press, London, (1976). \\
$[3]$ G. Chartrand and L. Lesniak, \textbf{Graphs and Digraphs}, CRC Press, 2000.\\
$[4]$ J.T. Gross and J. Yellen, \textbf{Graph Theory and its Applications}, CRC Press, 2006.\\
$[5]$ D. Kalman and R.Mena, \emph{The Fibonacci Number - Exposed,} Mathematics Magazine, Vol 76, No. 3, (2003): pp 167-181. \\ 
$[6]$ J. Kok, P. Fisher, B. Wilkens, M. Mabula, V. Mukungunugwa, \emph{Characteristics of Finite Jaco Graphs, $J_n(1), n \in \Bbb N$}, arXiv: 1404.0484v1 [math.CO], 2 April 2014. \\ 
$[7]$ J. Kok, P. Fisher, B. Wilkens, M. Mabula, V. Mukungunugwa,\emph{Characteristics of Jaco Graphs, $J_\infty(a), a \in \Bbb N$}, arXiv: 1404.1714v1 [math.CO], 7 April 2014. \\ 
$[8]$ D.B. West, \textbf{Introduction to Graph Theory}, Pearson Education Incorporated, 2001.
$[9]$ E. Zeckendorf, \emph {Repr\'esentation des nombres naturels par une somme de nombres de Fibonacci ou de nombres de Lucas,} Bulletin de la Soci\'et\'e Royale des Sciences de Li\`ege, Vol 41, (1972): pp 179-182. \\ \\
\noindent {\scriptsize [Remark: The concept of Jaco Graphs followed from a dream during the night of 10/11 January 2013 which was the first dream Kokkie (first author) could recall about his daddy after his daddy passed away in the peaceful morning hours of 24 May 2012, shortly before the awakening of Bob Dylan, celebrating Dylan's 71st birthday]}
\end{document}